\documentclass{birkjour}

\usepackage[T1]{fontenc}
\usepackage[utf8]{inputenc}
\usepackage{amsmath,amssymb}
\usepackage{array}
\usepackage{booktabs}
\usepackage{cite}
\usepackage{graphicx}
\usepackage{hyperref}

\newtheorem{theorem}{Theorem}[section]
\newtheorem{proposition}[theorem]{Proposition}
\newtheorem{lemma}[theorem]{Lemma}
\newtheorem{definition}[theorem]{Definition}
\newtheorem{remark}[theorem]{Remark}

\newcommand{\LOk}{L_{\mathrm{Ok}}}

\newcommand{\ZZ}{\mathbb Z}
\newcommand{\RR}{\mathbb R}
\newcommand{\QQ}{\mathbb Q}

\newcommand{\Conv}{\operatorname{Conv}}

\numberwithin{equation}{section}

\title{Integral Shell Polytopes of Composition Algebras}
\author{Daniele Corradetti}
\address{Grupo de F\'isica Matem\'atica\\
Instituto Superior T\'ecnico\\
Av. Rovisco Pais, 1049-001 Lisboa, Portugal}
\email{danielecorradetti@tecnico.ulisboa.pt}
\subjclass{Primary 17A75; Secondary 17A35, 17A20, 11H06, 52B11}
\keywords{composition algebras; Okubo algebra; integral systems; shell polytopes; root systems; \(E_8\) lattice}
\date{}

\begin{document}

\begin{abstract}
Integral systems in real composition algebras give rise to finite metric
configurations whose geometry is linked to both regular polytopes and root-systems. In this work we investigate, to our knowledge for the first time in
this form, the shell polytopes obtained by fixing the integral norm and taking
the convex hull of the corresponding integral elements. The first shells
recover the familiar root-polytopal configurations attached to the classical
Hurwitz systems, while the Okubo algebra gives a quite different behaviour.
The Okubo integral closure does not recover the Gosset polytope directly: it
selects a two-adic hierarchy whose first visible layers are a cross-polytope
and a \(D_8\) root polytope. We further show that the natural intermediate
lattice  is isometric to the rescaled cubic lattice; consequently every shell  decomposes into explicit
orbits of the hyperoctahedral group \(W(B_8)\), and the higher Okubo shells  admit a complete combinatorial description in
cubic-lattice coordinates. The full \(E_8\) Gosset polytope is then recovered
from the intermediate lattice by maximal-isotropic gluing along \((\ZZ/2)^4\). This gives an
interplay between non-unital composition, integral lattice shadows, and the
geometry of \(E_8\).
\end{abstract}

\maketitle

\section{Introduction and Motivations}

Unital composition algebras, also known as \emph{Hurwitz algebras}, are among
the most important algebraic objects where metric, multiplicative, and
geometric structures coexist. The four real division examples, namely the
reals, complex numbers, quaternions, and octonions, are not only algebraic
curiosities. They are recurrent bridges between geometry, symmetry, and
mathematical physics. In the octonionic case this bridge becomes particularly
visible: exceptional Lie groups, projective planes, and
highly symmetric lattices all appear in the same landscape
\cite{baez-octonions,springer-veldkamp,MCCAI23-Rosenfeld,CMZ25-Collineations}.
The purpose of the present work is to look the finite polytopes obtained from integral elements of
fixed norm and extend the results to non-unital composition algebras such as Okubo algebras\cite{CMZ24-MinimalCayley,ElduqueComposition}, thus completing the landscape (since para-Hurwitz algebras yields to the same results of Hurwitz algebras).

Common knowledge of integral systems in composition algebras says,
for instance, that the integers give \(A_1\), the Eisenstein integers give
\(A_2\), the Hurwitz quaternions give \(D_4\), and the Coxeter-Dickson
octonions give \(E_8\). This knowledge is only partially  correct: it
does not usually say whether one is counting algebraic units, vectors of a
minimal norm shell, antipodal classes, or vertices of a root polytope. Such a
distinction is harmless in some entries, but it becomes essential once one
wants to compare the classical Hurwitz systems with non-unital or non-
alternative composition algebras, and especially with the Okubo algebra. 

A composition algebra
is often introduced through the identity \(N(xy)=N(x)N(y)\), but in the present
context the norm is not only a multiplicative invariant. It is also a height
function on an integral system. Fixing a value of this height isolates a finite
population of integral elements, and the convex hull of that population gives a
geometric object whose symmetry can be tested independently of the original
algebra. The procedure is elementary, but it is unforgiving: a shell with the
right number of points is not automatically a root system, and a lattice that
is visibly related to \(E_8\) need not have the \(E_8\) root shell as its first
layer. We therefore distinguish throughout the paper between the algebra, the
integral order, the underlying metric shadow, and the shell polytope; in the
classical Hurwitz cases these distinctions collapse, while in the Okubo case
they separate, and the separation itself is the phenomenon to be measured.

In analogy to the octonionic case, we are interested in the finite geometry
that is produced by integral points of a fixed norm. Given an integral system
\(\Lambda\) and a positive integer \(N\), one may cut \(\Lambda\) by the sphere
of norm \(N\). The corresponding shell is
\begin{equation}
S_N(\Lambda)=\{x\in\Lambda:N(x)=N\},
\end{equation}
and the finite polytope attached to the shell is
\begin{equation}
P_N(\Lambda)=\Conv(S_N(\Lambda)).
\end{equation}
Thus a composition algebra does not give rise to only one finite configuration.
It gives rise to a hierarchy of shell polytopes. The first shell may recover a
root system; the next shells may reveal further integral arithmetic which is
often overlooked.

The less classical object in this hierarchy is the \emph{Okubo algebra}. While
the octonions have a unit element and enjoy alternativity, the Okubo algebra is
non-unital and flexible but not alternative. This contrast is the central theme
of the recent literature on non-unital algebras and
non-alternative realizations of exceptional geometries
\cite{MCZ25-OkuboPhysics,CZ22-OkuboSpin,CMZ24-MinimalCayley}. In the arithmetic
model used here, the Okubo product forces \(\QQ(\sqrt3)\)-arithmetic. The
resulting integral order has a direct \(\ZZ\)-metric shadow
\begin{equation}
\LOk=\ZZ(2b_0)+\cdots+\ZZ(2b_3)
\oplus \ZZ(4b_4)+\cdots+\ZZ(4b_7)
\subset E_8.
\end{equation}
This lattice is not \(E_8\). It is a \(2\)-primary conductor sublattice:
\begin{equation}
[E_8:\LOk]=2^{12},\qquad \det(\LOk)=2^{24}.
\end{equation}
Its shell structure begins as
\begin{equation}
A_1^8 \longrightarrow D_8 \longrightarrow
\hbox{higher Okubo-selected shells}.
\end{equation}
The full \(E_8\) Gosset polytope appears only after passing through the
intermediate lattice
\begin{equation}
M=(1/2)\LOk,\qquad [E_8:M]=2^4,
\end{equation}
and then gluing \(M\) to the unimodular overlattice \(E_8\). For structural
properties of Okubo algebras, automorphisms, derivations and idempotents we
refer to Elduque's work \cite{ElduqueOkubo}; for the general background on
composition algebras we shall use the standard references
\cite{baez-octonions,springer-veldkamp,ElduqueComposition}.

The first result of the paper is a reinterpretation of the classical integral
systems table through the language of first non-empty norm shells. With this
convention one recovers \(A_1\), \(A_2\), \(A_1^2\), \(A_1^4\), \(A_2^2\),
\(D_4\), \(A_1^8\), \(A_2^4\), \(D_4^2\), and \(E_8\). The old labels \(C_2\)
and \(C_8\) then acquire their correct meaning: they are not purely first-shell
objects, but require a multi-shell interpretation. In the octonionic
Coxeter-Dickson case the first shell consists of the
\(240\) roots of \(E_8\), and its convex hull is the Gosset polytope \(4_{21}\)
\cite{conway-sloane,coxeter-polytopes}. In the Okubo case the story is subtler.
The conductor shadow \(\LOk\) has no norm-one elements at all, and its norm
shells are supported only when \(4\mid N\). Its first visible shell,
\(S_4(\LOk)\), has \(16\) vertices and is an \(8\)-cross-polytope; after
division by \(2\), it becomes an \(A_1^8\) root subsystem of \(E_8\). Its second
visible shell, \(S_8(\LOk)\), has \(112\) vertices and is the root polytope of
type \(D_8\). The next two non-empty shells, \(S_{12}(\LOk)\) and
\(S_{16}(\LOk)\), have respectively \(448\) and \(1136\) vertices.

The second main result is the identification of the intermediate lattice
\(M=(1/2)\LOk\) with the rescaled cubic lattice \(\sqrt2\,\ZZ^8\). This pulls
the Okubo hierarchy into a classical setting: every shell of \(M\) is a union
of \(W(B_8)\)-orbits in the cubic lattice. In particular \(S_{12}(\LOk)/2\) is
the single orbit of vectors of type \((\pm1,\pm1,\pm1,0,0,0,0,0)\) (\(448\)
elements), while \(S_{16}(\LOk)/2\) splits into two \(W(B_8)\)-orbits, of types
\((\pm2,0^7)\) and \((\pm1,\pm1,\pm1,\pm1,0^4)\), with \(16\) and \(1120\)
elements respectively.  

The structure of the work is the following. In Section~2 we define integral
systems and shell polytopes, and we explain how the classical table should be
read through first shells. In Section~3 we pass from the classical table to the
Coxeter-Dickson \(E_8\) lattice, whose first shell is the Gosset polytope. In
Section~4 we introduce the Okubo conductor lattice, study its first shells, and
explain the appearance of \(A_1^8\) and \(D_8\). In Section~5 we introduce the
intermediate lattice \(M=(1/2)\LOk\) and describe the gluing
\(M\subset E_8\). Section~6 contains the conclusions and future developments.
The article ends with the computational methods, the acknowledgments, and the
required declarations for submission.

\section{Integral Systems and Shell Polytopes}

In order to pass from the algebraic table to a genuine geometry of shells, we
first recall the metric structure that underlies the whole construction. The
point is not only that multiplication preserves a norm, but that the same norm
allows us to cut the integral system by spheres and then to study the convex
geometry of the resulting finite sets. This is a very modest operation from an
algebraic point of view, but it is precisely this operation that turns a table
of integral systems into a hierarchy of polytopes.

\begin{definition}
A real composition algebra is a finite-dimensional real algebra \(A\) equipped
with a positive definite quadratic form \(N:A\to\RR\) satisfying
\begin{equation}
N(xy)=N(x)N(y).
\end{equation}
In the Okubo case we use a symmetric composition algebra. The algebra need not
be unital, but the composition norm and the associated bilinear form remain the
central metric objects.
\end{definition}

Throughout this paper the bilinear form is normalized by
\begin{equation}
\langle x,x\rangle=2N(x).
\end{equation}
This convention is convenient because all root systems considered here have
roots of squared length \(2\) after the usual normalization. Thus the shell
\(S_N(\Lambda)\) is equivalently the set of lattice vectors satisfying
\begin{equation}
\langle x,x\rangle=2N.
\end{equation}
In the computational files the target value of the quadratic form is denoted
by \(q=2N\). This convention is harmless, but it must be kept visible: many
confusions about units and roots come from switching between \(N\) and
\(\langle x,x\rangle\).

The normalization also clarifies the relation between algebraic language and
lattice language. In algebra one naturally says that an element has norm
\(1\), \(4\), or \(8\). In the theory of root systems one usually speaks about
vectors of squared length \(2\). The convention \(\langle x,x\rangle=2N(x)\)
bridges these two idioms. Thus a norm-one element of a Hurwitz order becomes a
root-length vector, while a norm-four element of \(\LOk\) has squared length
\(8\) until it is divided by \(2\). This simple bookkeeping is what prevents
the Okubo shell \(S_4(\LOk)\) from being mistaken for the full \(E_8\) shell.

It remains to specify which discrete objects inside the algebra should be
called integral. In analogy to the octonionic case, the guiding principle is
that both the norm and, when present, the product must respect the arithmetic
lattice.

\begin{definition}
Let \(A\) be a real composition algebra or a symmetric composition algebra. An
integral system is a full-rank discrete \(\ZZ\)-lattice \(\Lambda\subset A\)
such that \(N(\Lambda)\subset\ZZ\), the associated bilinear form is integral
on \(\Lambda\), and, when a multiplicative order is being considered,
\(\Lambda\Lambda\subset\Lambda\).
\end{definition}

For arithmetic systems over a quadratic ring, such as \(R=\ZZ[\sqrt3]\), we
distinguish the \(R\)-order from its direct \(\ZZ\)-metric shadow. This
distinction is essential for Okubo. The multiplicative closure is naturally an
\(R\)-statement, while the shell polytopes studied here are ordinary
\(\ZZ\)-lattice polytopes. This separation of roles is one of the main reasons
why the Okubo case does not simply reproduce the octonionic \(E_8\) picture.

The phrase ``metric shadow'' is meant in a precise sense. It does
not assert that the \(\ZZ\)-lattice remembers the full multiplication table.
It asserts only that the trace form, norm form, and chosen integral basis
produce an ordinary positive definite lattice on which finite shell
enumeration can be performed. This is enough for the polytope problem. It is
not enough for a full classification of integral Okubo orders, and we do not
claim such a classification here. The distinction is useful because it allows
one to ask a clean question: once the non-unital product has forced a certain
integral closure, what finite configurations appear in the metric shadow of
that closure?

We can now attach a finite polytope to each integral value of the norm. Instead
of retaining only the first visible layer, we keep the whole hierarchy of
layers.

\begin{definition}
For a positive integer \(N\), the norm shell and shell polytope of an integral
system \(\Lambda\) are
\begin{equation}
S_N(\Lambda)=\{x\in\Lambda:N(x)=N\},
\qquad
P_N(\Lambda)=\Conv(S_N(\Lambda)).
\end{equation}
\end{definition}

\begin{lemma}
If the form on \(\Lambda\) is positive definite, then every shell
\(S_N(\Lambda)\) is finite.
\end{lemma}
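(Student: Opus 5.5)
The plan is the standard compactness-plus-discreteness argument, carried out in coordinates so that nothing beyond linear algebra is needed. Fix a \(\ZZ\)-basis \(e_1,\dots,e_n\) of \(\Lambda\). Since \(\Lambda\) has full rank, this basis is also an \(\RR\)-basis of the ambient space \(A\). We then identify \(A\) with \(\RR^n\) via \(x=\sum_i c_i e_i\), so that \(\Lambda\) corresponds exactly to \(\ZZ^n\). In these coordinates the form becomes \(\langle x,x\rangle=c^{T}Gc\), where \(G=(\langle e_i,e_j\rangle)\) is the Gram matrix. By hypothesis \(G\) is a real symmetric positive definite matrix.

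The key step is a uniform lower bound. Let \(\lambda_{\min}>0\) be the smallest eigenvalue of \(G\); it is positive precisely because the form is positive definite. Then
\begin{equation}
c^{T}Gc\ \ge\ \lambda_{\min}\,\|c\|^2
\end{equation}
for all \(c\in\RR^n\), where \(\|\cdot\|\) is the Euclidean norm of the coordinate vector.

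Alternatively, one can argue by compactness. The continuous function \(c\mapsto c^{T}Gc\) is strictly positive on the compact unit sphere of \(\RR^n\), so it attains a minimum \(\mu>0\) there. Homogeneity of degree two then gives the same inequality with \(\mu\) in place of \(\lambda_{\min}\).

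Now take \(x\in S_N(\Lambda)\). By the normalization \(\langle x,x\rangle=2N(x)\), the condition \(N(x)=N\) means \(c^{T}Gc=2N\). The inequality above then forces \(\|c\|^2\le 2N/\lambda_{\min}\). Hence every coordinate satisfies \(|c_i|\le\sqrt{2N/\lambda_{\min}}\). But the \(c_i\) are integers, so each one takes at most \(2\lfloor\sqrt{2N/\lambda_{\min}}\rfloor+1\) values. Therefore \(S_N(\Lambda)\) injects into a finite box of \(\ZZ^n\), which proves that it is finite.

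There is no real obstacle in this argument. The only point that needs care is that we need the positivity constant \(\lambda_{\min}\), not merely \(c^{T}Gc>0\) for each nonzero \(c\): pointwise positivity alone would not bound the coordinates. This is exactly where positive definiteness of a form on a finite-dimensional space is used, through the spectral theorem or compactness of the sphere.

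It is also worth noting that the argument does not use integrality of \(N\) on \(\Lambda\), nor any multiplicative structure. So it applies verbatim to the metric shadow \(\LOk\) and to the rescaled lattice \(M=(1/2)\LOk\). As a consequence, each shell polytope \(P_N(\Lambda)\) is a genuine convex polytope, possibly empty.
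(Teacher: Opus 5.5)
Your proof is correct and follows the same compactness-plus-discreteness argument as the paper, which notes that the shell lies on the compact sphere \(\langle x,x\rangle=2N\) and that a discrete lattice meets a compact set in finitely many points. You make both halves explicit: the bound \(c^{T}Gc\ge\lambda_{\min}\|c\|^2\) is why the sphere is bounded, and restricting integer coordinate vectors to a finite box is the concrete form of discreteness.
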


\begin{proof}
The shell lies on the compact sphere \(\langle x,x\rangle=2N\), and a discrete
lattice has finite intersection with a compact set.
\end{proof}

\begin{remark}
It is worth noting that the proof of finiteness uses only discreteness and
positive definiteness. It does not use associativity, alternativity, or the
existence of a unit. This is why the same shell language can be used for
classical Hurwitz algebras and for the non-unital Okubo algebra.
\end{remark}

The lemma is elementary, but it gives the formal permission to compare rather
different algebraic worlds. Once the norm form is positive definite and the
integral system is discrete, every shell is a finite object. Multiplication
then influences the answer indirectly, by determining which lattice is the
right integral system, but the act of taking a shell is purely metric. This is
the advantage of the method. It can compare a unital alternative order and a
non-unital symmetric composition order without pretending that their products
have the same formal properties.

A shell \(S_N(\Lambda)\) will be called a root shell only in the presence of a
rigid reflection-theoretic verification. We require the shell to be antipodal,
to span the ambient space, and to admit, after rescaling to squared length
\(2\), a system of simple roots with finite Cartan matrix. Moreover, the
reflections generated by those simple roots must preserve the shell and must
generate the whole shell. This is the criterion used below for \(D_8\). For
\(S_{12}(\LOk)\) and \(S_{16}(\LOk)\), only weaker lattice-shell properties have
been certified, so we do not assign a root-system type to those shells.

This convention is intentionally strict since root systems are not merely finite point configurations. They carry a
reflection structure, a Cartan matrix, and an orbit relation generated by
simple reflections. The first shell of the Hurwitz \(D_4\) order passes this
test, as does the Coxeter-Dickson \(E_8\) shell. The Okubo shell
\(S_8(\LOk)\) also passes it after the appropriate normalization. By contrast,
the higher shells \(S_{12}(\LOk)\) and \(S_{16}(\LOk)\) have been tested only
for antipodality, rank, centering, and a degree-two spherical design condition.
These are meaningful geometric properties, but they do not by themselves
produce a Coxeter diagram.

One may view this as a small discipline imposed on the terminology. The word
``polytope'' is broad, because every finite shell has a convex hull. The phrase
``root polytope'' is narrow, because it refers to a convex hull whose vertices
are the roots of a verified root system. In between lies a large territory of
lattice-shell polytopes, often highly symmetric and sometimes new from the
point of view of the algebra that produced them. The Okubo hierarchy lives
precisely in this intermediate territory.

The fundamental work on integral systems over composition algebras and their relation with root systems was laid down in a detailed way by N.W. Johnson in \cite{Jo13,Jo17,Jo18} and can be found in Table~\ref{tab:Johnson Integers}.

\ref{tab:Johnson Integers}).
\begin{table}
\centering{}%
\begin{tabular}{|c|c|c|c|c|c|}
\hline 
\textbf{Name} & \textbf{Alg.} & \textbf{Dim.} & \textbf{Symbol} & \textbf{Unit El.} & \textbf{Lattice}\tabularnewline
\hline 
\hline 
Integers & $\mathbb{R}$ & 1 & $\mathbb{Z}$ & 2 & $A_{1}$\tabularnewline
\hline 
Eisenstein & $\mathbb{C}$ & 2 & $\mathbb{C}_{A_{2}}$ & 3 & $A_{2}$\tabularnewline
\hline 
Gaussian & $\mathbb{C}$ & 2 & $\mathbb{C}_{C_{2}}$ & 4 & $C_{2}$\tabularnewline
\hline 
Hamilton & $\mathbb{H}$ & 4 & $\mathbb{H}_{2C_{2}}$ & 8 & $C_{2}\oplus C_{2}$\tabularnewline
\hline 
Hybrid & $\mathbb{H}$ & 4 & $\mathbb{H}_{2A_{2}}$ & 12 & $A_{2}\oplus A_{2}$\tabularnewline
\hline 
Hurwitz & $\mathbb{H}$ & 4 & $\mathbb{H}_{D_{4}}$ & 24 & $D_{4}$\tabularnewline
\hline 
Cayley-Graves & $\mathbb{O}$ & 8 & $\mathbb{O}_{C_{8}}$ & 16 & $C_{8}$\tabularnewline
\hline 
Comp. Eisenstein & $\mathbb{O}$ & 8 & $\mathbb{O}_{4A_{2}}$ & 24 & $A_{2}\oplus A_{2}\oplus A_{2}\oplus A_{2}$\tabularnewline
\hline 
Coupled Hurwitz & $\mathbb{O}$ & 8 & $\mathbb{O}_{2D_{4}}$ & 48 & $D_{4}\oplus D_{4}$\tabularnewline
\hline 
Coxeter-Dickson & $\mathbb{O}$ & 8 & $\mathbb{O}_{E_{8}}$ & 240 & $E_{8}$\tabularnewline
\hline 
\end{tabular}\caption{\label{tab:Johnson Integers}Summary of all  sets
of integer elements over division Hurwitz algebra $\mathbb{R},\mathbb{C},\mathbb{H}$
and $\mathbb{O}$. In the first column we indicated the name according
to \cite{Jo13}; then the related algebra in which the integral set
is embedded; the dimension of the algebra; the notational symbol we
introduced; the number of invertible unit elements; their algebraic
structure as abelian group (real and complex case), non-abelian group
(quaternionic case) and as Moufang loop (octonionic case); finally,
in the last column the lattice associated with the integral set.}
\end{table}

A new version of such table is that in Table~\ref{tab:classical}. The point of the latter  is to make explicit what the first shell actually
contains. The column called old count records the number traditionally attached
to the integral system. The column \(\#S_1\) records the actual number of
vectors in the norm-one shell with our normalization. This small distinction
is what allows the table to be used safely when comparing classical and Okubo
integrality.

\begin{table}[htbp]
\centering
\small
\renewcommand{\arraystretch}{1.15}
\resizebox{\textwidth}{!}{%
\begin{tabular}{llllrrll}
\toprule
Name & Algebra & Symbol & Shadow & Old count & \(\#S_1\) & First-shell type & Comment \\
\midrule
Integers & \(\RR\) & \(Z\) & \(A_1\) & 2 & 2 & \(A_1\) & segment \\
Eisenstein & \(\mathbb C\) & \(CA_2\) & \(A_2\) & 3 & 6 & \(A_2\) & antipodal convention \\
Gaussian & \(\mathbb C\) & \(CC_2\) & square lattice & 4 & 4 & \(A_1^2\) & \(C_2\) is multi-shell \\
Hamilton & \(\mathbb H\) & \(H2C_2\) & cubic lattice & 8 & 8 & \(A_1^4\) & \(C_2+C_2\) is multi-shell \\
Hybrid & \(\mathbb H\) & \(H2A_2\) & \(A_2^2\) & 12 & 12 & \(A_2^2\) & direct sum shell \\
Hurwitz & \(\mathbb H\) & \(HD_4\) & \(D_4\) & 24 & 24 & \(D_4\) & 24-cell \\
Cayley--Graves & \(\mathbb O\) & \(OC_8\) & cubic lattice & 16 & 16 & \(A_1^8\) & \(C_8\) is multi-shell \\
Comp. Eisenstein & \(\mathbb O\) & \(O4A_2\) & \(A_2^4\) & 24 & 24 & \(A_2^4\) & four hexagonal blocks \\
Coupled Hurwitz & \(\mathbb O\) & \(O2D_4\) & \(D_4^2\) & 48 & 48 & \(D_4^2\) & two 24-cells \\
Coxeter-Dickson & \(\mathbb O\) & \(OE_8\) & \(E_8\) & 240 & 240 & \(E_8\) & Gosset \(4_{21}\) \\
\bottomrule
\end{tabular}}
\caption{This table summarizes the audited first-shell interpretation of the
classical integral systems table. The entries \(C_2\) and \(C_8\) require
longer vectors from higher shells.}
\label{tab:classical}
\end{table}

Looking at Table~\ref{tab:classical} one can easily see which systems
 are already visible in the first non-empty shell and which require a
longer view of the shell hierarchy. The two readings agree perfectly for the
Hurwitz \(D_4\) order and for the Coxeter-Dickson \(E_8\) order. They do not
agree as literally for the Gaussian and cubic entries, where the traditional
non-simply-laced notation remembers more than the minimal shell.

This  reading is the one used in the rest of the article. It has the
advantage of being uniform across dimensions. It also avoids giving a special
status to algebraic units when the algebra under consideration has no unit at
all. For a non-unital algebra, the phrase ``unit element'' cannot be the basic
geometric input. The norm shell can. This is the reason why the Okubo entry is
placed in the same formal framework as the Hurwitz entries, even though its
multiplication has a different character.

\begin{proposition}
For each classical integral system in Table~\ref{tab:classical}, the first
non-empty norm shell gives the first-shell type recorded in the table. In the
Coxeter-Dickson case this shell is the full \(E_8\) root system.
\end{proposition}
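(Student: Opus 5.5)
We verify the ten rows of Table~\ref{tab:classical} one at a time. Each integral system has an explicit \(\ZZ\)-basis in the coordinates of its algebra, so each row reduces to a short lattice computation. Since the first non-empty shell is \(S_1\), three facts need checking: \(N(\Lambda)\subset\ZZ\), so no shell below \(N=1\) exists; \(S_1(\Lambda)\neq\emptyset\); and the norm-one vectors, which have \(\langle x,x\rangle=2\) under our normalization, form the stated root system. To organize the work, we note that every shadow lattice in the table is an orthogonal sum of copies of \(\ZZ\), \(A_2\), \(D_4\), or is \(E_8\). Since \(S_1\) of an orthogonal sum of positive integral lattices is the disjoint union of the \(S_1\)'s of the summands, it suffices to treat these four building blocks and then read off the sums.

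The decompositions are as follows. The Gaussian, Hamilton and Cayley--Graves orders are \(\ZZ^2,\ZZ^4,\ZZ^8\) in the standard orthonormal basis \(1,e_1,\dots\), with \(N\) the sum of squares. Their norm-one vectors are therefore exactly \(\pm e_i\), giving \(4,8,16\) pairwise orthogonal roots, which is \(A_1^n\). The Eisenstein order is \(\ZZ[\omega]\), with \(N(a+b\omega)=a^2-ab+b^2\); its six units \(\pm1,\pm\omega,\pm\omega^2\) form \(A_2\). The Hybrid and compound Eisenstein orders are orthogonal sums \(\ZZ[\omega]\oplus\ZZ[\omega]j\) and its octonionic doubling, so they give \(A_2^2\) and \(A_2^4\). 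The Hurwitz order is \(\ZZ^4\cup(\ZZ^4+\tfrac12(1,1,1,1))\). Its norm-one elements are the \(8\) vectors \(\pm e_i\) and the \(16\) vectors \(\tfrac12(\pm1,\pm1,\pm1,\pm1)\). After the standard isometry (rotation by \(\tfrac1{\sqrt2}\)-scaled Hadamard), these \(24\) vectors are the roots \(\pm e_i\pm e_j\) of \(D_4\). The coupled Hurwitz order is the orthogonal sum of two Hurwitz orders, so it gives \(D_4^2\). In each of these cases the root-system claim is checked against the paper's strict criterion. We exhibit simple roots whose Gram matrix is twice the Cartan matrix: \(A_1\), \(A_2\), or the \(D_4\) diagram with the triple node. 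We then observe that the Weyl group generated by their reflections is transitive on the listed norm-one vectors, since all roots of a simply laced irreducible system form one orbit. Finally, we count: \(2,6,4,8,12,24,16,24,48\).

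The main obstacle is the Coxeter--Dickson row. There one must show that \(S_1\) of the order equals the full \(240\)-element \(E_8\) root system, and not merely a sub-system. We would fix an explicit model of the order, namely Coxeter's, in which \(\Lambda\) has Gram matrix equal to the \(E_8\) Cartan matrix with respect to \(\langle\,,\rangle\). This makes \(\Lambda\) even and unimodular of rank \(8\), hence isometric to \(E_8\). One can either quote the uniqueness of such a lattice from \cite{conway-sloane}, or construct the isometry directly onto \(D_8\cup(D_8+\tfrac12\mathbf 1)\) by mapping basis to basis. Under this identification \(S_1\) consists of the vectors of squared length \(2\). These are the \(112\) vectors \(\pm e_i\pm e_j\) together with the \(128\) vectors \(\tfrac12(\pm1)^8\) with an even number of minus signs, for a total of \(240\). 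These are exactly the \(E_8\) roots. The checks that are easy to get wrong are the claim that the Gram matrix is the Cartan matrix and the claim of integrality, so this is the computation we would carry out explicitly. The Gosset identification of the convex hull then follows from \cite{coxeter-polytopes}.
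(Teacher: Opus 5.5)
Your proposal is correct and follows essentially the paper's route. Like the paper, you identify each order's Gram matrix as an orthogonal sum of the standard blocks $A_1$, $A_2$, $D_4$, or as $E_8$; you handle orthogonal sums by observing that $S_1$ of a sum is the union of the summands' $S_1$ (the $N=1$ case of the paper's shell convolution); and you enumerate the norm-one vectors, which you do by hand in explicit models rather than by exact machine enumeration.

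One small slip: with the normalization $\langle x,x\rangle=2N(x)$, the Gram matrix of the simple roots \emph{equals} the Cartan matrix, as you state correctly in the Coxeter--Dickson paragraph. With respect to the Euclidean form $\operatorname{Re}(x\overline y)$ it is half the Cartan matrix, not twice it.
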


\begin{proof}
The Gram matrices are the standard even Gram matrices for \(A_1\), \(A_2\),
\(D_4\), and \(E_8\), together with orthogonal sums of these blocks. Exact
enumeration of \(x^TGx=2\) gives the shell counts shown in the table. For
orthogonal sums the count is obtained by the corresponding shell convolution.
\end{proof}

This proposition also explains why the shell viewpoint is a more flexible
language than the unit-count viewpoint. The Gaussian square lattice, for
example, has a first shell of type \(A_1^2\), while the full non-simply-laced
configuration usually denoted by \(C_2\) requires a longer shell. Similarly,
the Cayley--Graves cubic octonionic system has a first shell \(A_1^8\), while
the \(C_8\) interpretation is not a purely first-shell statement. In other
words, the shell hierarchy is already present in the classical systems; the
Okubo algebra simply makes this hierarchy unavoidable.

This last sentence is important for the interpretation of the whole article.
The Okubo algebra is not being used as an exotic device inserted after the
classical theory has ended. It reveals a feature that was already latent in the
classical table. Even for the Gaussian and cubic entries, the geometry depends
on whether one looks only at the shortest vectors or also admits longer vectors
from the next shells. What is special about the Okubo case is that the first
few layers are forced by a conductor, and the conductor is dictated by
integrality of the non-unital product. Thus the shell hierarchy is no longer a
matter of optional refinement. It becomes the natural arithmetic fingerprint of
the algebra.

\section{The Classical Shell Picture and the Coxeter-Dickson Case}

The first shell of a classical integral system is only the beginning. If one
computes further shells, one obtains theta coefficients and increasingly large
finite configurations. These higher configurations are not always root systems
and they are not always regular polytopes. Nevertheless, they remain natural
lattice-shell polytopes, and they are useful because they distinguish integral
systems which have similar first-shell behaviour.

For example, the \(E_8\) theta coefficients begin
\begin{equation}
240,\ 2160,\ 6720,\ 17520,\ 30240,\ldots
\end{equation}
for \(q=2,4,6,8,10,\ldots\), while the cubic octonionic shell begins
\begin{equation}
16,\ 112,\ 448,\ 1136,\ 2016,\ldots .
\end{equation}
The latter sequence will reappear as the initial theta sequence of the Okubo
intermediate lattice \(M\). This coincidence should not be read as a statement
that the Okubo algebra directly gives the \(E_8\) root shell. It means instead
that the Okubo conductor selects a very particular two-adic substructure inside
the Coxeter-Dickson lattice.

The theta sequence is useful here because it suppresses coordinates without
throwing away arithmetic information. Each coefficient counts vectors on a
fixed sphere, and therefore records the sizes of the shell polytopes before
one begins to examine their faces or symmetries. Two lattices with the same
rank and determinant may still be separated by their shell counts. Conversely,
when a sequence reappears in a different construction, as it does below for
the normalized Okubo lattice, it signals that a hidden lattice relation should
be sought. In our case that relation is the inclusion \(M\subset E_8\), with
index \(2^4\).

This also explains why the Coxeter-Dickson case is the right classical
benchmark. It is not only the largest of the four real division composition
algebras. It is the place where the integral order, the even unimodular
rank-eight lattice, the \(E_8\) root system, and the Gosset polytope meet in a
single object. Any non-unital eight-dimensional composition algebra that is
compared with the octonions must therefore be compared not with an abstract
copy of \(\RR^8\), but with this very rigid arithmetic-geometric package.

Let \(b_0,\ldots,b_7\) be the Coxeter-Dickson octonion basis used in the
computations:
\begin{equation}
\begin{split}
b_0=1,\quad b_1=e_1,\quad b_2=e_2,\quad b_3=e_3,\quad
b_4=h,\\
b_5=e_1h,\quad b_6=e_2h,\quad b_7=e_3h.
\end{split}
\end{equation}
where
\begin{equation}
h=\frac{e_1+e_2+e_3+e_4}{2}.
\end{equation}
With the form
\begin{equation}
\langle x,y\rangle=2\operatorname{Re}(x\overline y),
\end{equation}
the Gram matrix of this basis is even, positive definite, and unimodular. It
is the \(E_8\) lattice.

The basis is written explicitly because the later conductor computations are
coordinate computations in this basis. The first four vectors form the
obvious quaternionic part, while the last four are obtained by multiplying by
the half-vector \(h\). This is the familiar Coxeter-Dickson enlargement that
turns the more naive octonionic lattice into the even unimodular one. In this
article it also serves as a fixed measuring device. The Okubo shadow, the
intermediate lattice \(M\), and the final gluing back to \(E_8\) are all
expressed relative to these same eight vectors.

This coordinate choice is not meant to hide the intrinsic geometry. On the
contrary, it makes the intrinsic assertions auditable. The statement that the
Gram matrix is even and unimodular is basis-independent, but the determinant
and parity checks are most transparent once a basis has been fixed. Likewise,
the statement that a shell is an \(E_8\), \(D_8\), or \(A_1^8\) root system is
intrinsic after verification, but the verification begins with concrete
integer coordinates.

\begin{theorem}
The first non-empty shell of the Coxeter-Dickson order is
\begin{equation}
S_1(E_8)=\{x\in E_8:\langle x,x\rangle=2\}.
\end{equation}
It has \(240\) vectors, is antipodal, spans rank \(8\), and is the \(E_8\) root
system. Its convex hull is the Gosset polytope \(4_{21}\).
\end{theorem}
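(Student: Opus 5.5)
The plan is to reduce everything to the standard even-coordinate model of \(E_8\) and then read off the root system and its convex hull from classical facts. First I would verify, as the text already asserts, that the Gram matrix \(G=(\langle b_i,b_j\rangle)\) of the Coxeter-Dickson basis is even, positive definite and has determinant \(1\); this is a finite integer computation. Since \(\langle x,x\rangle = x^TGx\) is an even integer for every \(x\) in the lattice, the first non-empty norm shell is the set where \(\langle x,x\rangle=2\), provided this set is non-empty. Rather than enumerate directly in the \(b\)-coordinates, I would invoke the uniqueness of the even unimodular positive definite lattice in rank \(8\) to obtain an isometry with the model
\begin{equation}
\Gamma_8=\{x\in\ZZ^8\cup(\ZZ+\tfrac12)^8:\textstyle\sum x_i\in2\ZZ\}.
\end{equation}
Alternatively, and more concretely, I would write down an explicit change of basis from the \(b_i\) to a simple-root basis of \(\Gamma_8\). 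This avoids quoting the uniqueness theorem and keeps the argument auditable in the same spirit as the paper's coordinates.

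In \(\Gamma_8\) the norm-\(2\) vectors are counted directly. The integral ones are \(\pm e_i\pm e_j\) with \(i<j\), giving \(4\binom82=112\) vectors. The half-integral ones are \((\pm\frac12)^8\) with an even number of minus signs, giving \(2^7=128\) vectors. The total is \(240\). Antipodality is immediate from \(x\mapsto -x\), and the span has rank \(8\) because the \(D_8\) part \(\pm e_i\pm e_j\) already spans \(\RR^8\).

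For the root-system claim I would use the strict criterion of Section~2. I would exhibit the simple roots \(\alpha_i=e_{i+1}-e_{i+2}\) for \(i=0,\dots,5\), together with \(\alpha_6=e_7+e_8\) and \(\alpha_7=-\tfrac12(1,\dots,1)\), adjusted to the usual \(E_8\) convention. I would check that their Cartan matrix is the \(E_8\) Cartan matrix. The shell is preserved by every reflection \(s_\alpha(x)=x-\langle x,\alpha\rangle\alpha\) with \(\alpha\) in the shell, because \(\langle x,\alpha\rangle\in\ZZ\) and reflections are lattice isometries preserving the norm. It remains to show that the Weyl group orbit of the simple roots exhausts all \(240\) vectors. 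The cleanest route is that every norm-\(2\) lattice vector can be reflected down in height to a simple root, by the standard argument that a non-simple positive root \(\beta\) has \(\langle\beta,\alpha_i\rangle>0\) for some \(i\). A second option is to note that \(W(D_8)\) together with one reflection in a half-integral root mixes the two types transitively. I expect this transitivity and generation step to be the main obstacle if done by hand. I would handle it with the height-reduction argument, cross-checked against the known order \(|W(E_8)|=696729600\) and the fact that \(240=|W(E_8)|/|W(E_7)|\).

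Finally, for the convex hull, I would cite Coxeter: the \(240\) roots of \(E_8\) form the vertex set of the uniform polytope \(4_{21}\), the Wythoff construction with ringed node at the end of the long arm \cite{coxeter-polytopes,conway-sloane}. The only thing left to check is that every shell vector is a vertex of \(P_1(E_8)\), which holds because all points lie on a common sphere. The identification with \(4_{21}\) then follows from the Weyl-orbit description obtained in the previous step.
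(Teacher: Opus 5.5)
Your outline is sound but takes a different route from the paper. The paper stays in the Coxeter--Dickson coordinates:

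\begin{itemize}
\item it enumerates the solutions of \(x^TGx=2\) for the Gram matrix, finding \(240\);
\item it extracts simple roots from that shell and checks that their Cartan matrix is of type \(E_8\);
\item it checks that the reflection orbit closes up on the whole shell;
\item it then cites the root-polytope realization of \(4_{21}\).
\end{itemize}

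You use only that \(G\) is even, positive definite and unimodular, transport the problem to the even coordinate model \(\Gamma_8\), and do everything by hand there. Every assertion in the theorem is isometry-invariant, so an abstract isometry from the rank-\(8\) uniqueness theorem really suffices. Your version replaces an unpublished computer certificate by a transparent count \(112+128=240\) that also exhibits \(D_8\subset E_8\). The price is quoting the classification of even unimodular lattices, or writing out the change of basis. The paper's enumeration, by contrast, identifies the roots directly in the \(b\)-coordinates, although the theorem does not need that.

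There is, however, a concrete error in your simple roots. With \(\alpha_5=e_6-e_7\), \(\alpha_6=e_7+e_8\), \(\alpha_7=-\tfrac12(1,\dots,1)\), one gets \(\langle\alpha_5,\alpha_6\rangle=\langle\alpha_6,\alpha_7\rangle=-1\) and \(\alpha_7\perp\alpha_0,\dots,\alpha_5\). The diagram is therefore a chain of eight nodes: these vectors form an \(A_8\) base spanning an index-\(3\) sublattice. Their reflection orbit has only \(72\) vectors, so as written both your Cartan check and your generation step fail.

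Take \(\alpha_6=e_6+e_7\) instead. It is orthogonal to \(\alpha_5\), attaches to \(\alpha_4=e_5-e_6\), and \(\alpha_7\) attaches to it. This gives arms of \(4,1,2\) nodes at \(\alpha_4\), i.e. \(E_8\). Since the \(E_8\) Cartan matrix has determinant \(1\), these roots form a \(\ZZ\)-basis of \(\Gamma_8\).

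You also apply height reduction to arbitrary norm-\(2\) lattice vectors, not to elements already known to be roots. So you must first show that such a vector has all simple-root coefficients of one sign. Write \(\beta=\beta_+-\beta_-\) with disjoint supports; then \(\langle\beta_+,\beta_-\rangle\le 0\), so \(\langle\beta,\beta\rangle\ge 4\) if both parts are nonzero. In the reduction step, the reducing index satisfies \(\langle\beta,\alpha_i\rangle=1\) by Cauchy--Schwarz, so \(s_i\beta=\beta-\alpha_i\) stays positive.

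Alternatively, your ``cross-check'' already closes this step on its own. The \(E_8\) root system has \(240\) roots, all of them lie in the shell, and the shell has exactly \(240\) vectors.
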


\begin{proof}
The exact enumeration of vectors of squared length \(2\) in the certified Gram
matrix gives \(240\) vectors. The standard simple roots extracted from this
shell have Cartan matrix \(E_8\), and the reflection orbit gives the full
shell. This is the usual root-polytope realization of the Gosset polytope.
\end{proof}

\begin{remark}
In the octonionic case the algebraic and metric stories agree perfectly at the
first shell: the Coxeter-Dickson integral system is already the even
unimodular \(E_8\) lattice, and the norm-one elements are precisely the \(E_8\)
roots. This perfect agreement is the point against which the Okubo case should
be compared. While the Okubo product is still a composition product, its
arithmetic integral closure does not preserve the same \(E_8\) order.
\end{remark}

For completeness, let us recall why this agreement is so exceptional. In rank
eight, positive definite even unimodular lattices are rigid: up to isometry
there is only \(E_8\). The Coxeter-Dickson order therefore sits at a very
special intersection of non-associative algebra and integral quadratic forms.
Its norm-one octonions are not merely algebraic units in a loose sense; after
the normalization used in this article they are exactly the minimal vectors of
the unique even unimodular rank-eight lattice. The convex hull of those
minimal vectors is therefore determined at once by the algebraic norm and by
the arithmetic classification of the lattice.

This exact viewpoint also makes the language of polytopes compatible with the
language of orders. The word ``convex'' belongs to real geometry, while the
word ``integral'' belongs to arithmetic. A shell polytope is the place where
the two meet. We take the real convex hull only after the vertices have been
selected by an integral norm equation. In the Coxeter-Dickson case this
procedure recovers a classical regular polytope. In the Okubo case it will
recover first a cross-polytope, then a \(D_8\) root polytope, and then larger
finite configurations whose status is more delicate.

\section{The Okubo Conductor Shadow and Its Shell Hierarchy}

The Okubo product used here is obtained from the octonion algebra by the
twisted formula
\begin{equation}
x*y=\tau(\overline x)\tau^2(\overline y),
\end{equation}
where \(\tau\) is an order-three transformation whose coefficients require
\(\QQ(\sqrt3)\) \cite{ElduqueComposition}. This is the place where the Okubo algebra departs from the
octonionic picture. The Coxeter-Dickson \(E_8\) order is not closed under the
Okubo product. After a diagonal scaling
\begin{equation}
D=\operatorname{diag}(2,2,2,2,4,4,4,4),
\end{equation}
the resulting structure constants lie in \(R=\ZZ[\sqrt3]\). The associated
direct metric shadow is
\begin{equation}
\LOk=\ZZ(2b_0)+\cdots+\ZZ(2b_3)
\oplus\ZZ(4b_4)+\cdots+\ZZ(4b_7).
\end{equation}

\begin{proposition}
The Okubo shadow satisfies
\begin{equation}
\LOk\subset E_8,\qquad [E_8:\LOk]=2^{12},\qquad \det(\LOk)=2^{24}.
\end{equation}
\end{proposition}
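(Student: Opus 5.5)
The inclusion, the index and the determinant all follow from one observation. In the Coxeter--Dickson basis \(b_0,\ldots,b_7\), the lattice \(\LOk\) is the image of \(E_8=\bigoplus_i \ZZ b_i\) under the diagonal map \(D=\operatorname{diag}(2,2,2,2,4,4,4,4)\). The plan is to write everything in this basis and read off the three claims from the elementary divisors of \(D\) and from the Gram matrix.

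First, the inclusion. Each generator \(2b_i\) (\(0\le i\le 3\)) and \(4b_j\) (\(4\le j\le 7\)) is an integer combination of the \(b_k\). The previous section shows that the \(b_k\) form a \(\ZZ\)-basis of the \(E_8\) lattice, since their Gram matrix is even, positive definite and unimodular. Hence every generator of \(\LOk\) lies in \(E_8\), and \(\LOk\subset E_8\). Moreover \(\LOk=D(E_8)\) in these coordinates, so \(\LOk\) is a full-rank sublattice with \(\ZZ\)-basis \(\{2b_0,\ldots,2b_3,4b_4,\ldots,4b_7\}\).

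Second, the index. Since the two bases are related by the diagonal integer matrix \(D\), the quotient splits as
\[
E_8/\LOk\cong(\ZZ/2)^4\oplus(\ZZ/4)^4 .
\]
Its order is \(|\det D|=2^4\cdot4^4=2^{12}\).

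Third, the determinant. Let \(G\) be the Gram matrix of the \(b_i\), so \(\det G=1\) by unimodularity. The Gram matrix of the scaled basis is \(D^{T}GD=DGD\). Therefore
\[
\det(\LOk)=(\det D)^2\det G=(2^{12})^2=2^{24}.
\]
This is also the general identity \(\det(L')=[L:L']^2\det(L)\) applied to \(L=E_8\).

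No step here is a genuine obstacle. The only point needing care is that the claims rest on the \(b_i\) being a \(\ZZ\)-basis of the unimodular \(E_8\) lattice, and not merely eight vectors lying in it. I would take this from the Gram-matrix computation stated before the theorem on \(S_1(E_8)\). If needed, I would confirm it by computing \(\det G=1\) explicitly from \(\langle x,y\rangle=2\operatorname{Re}(x\overline y)\). The arithmetic over \(\ZZ[\sqrt3]\) plays no role, since the metric shadow is defined purely through \(D\).
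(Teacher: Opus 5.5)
Your proposal is correct and follows the same route as the paper. In the Coxeter--Dickson basis the inclusion is the diagonal matrix \(D=\operatorname{diag}(2,2,2,2,4,4,4,4)\), so \([E_8:\LOk]=\det D=2^{12}\) and \(\det(\LOk)=(\det D)^2\det(E_8)=2^{24}\). Your two extra remarks, the explicit quotient \((\ZZ/2)^4\oplus(\ZZ/4)^4\) and the requirement that \(b_0,\ldots,b_7\) be a \(\ZZ\)-basis of the unimodular lattice, only make explicit what the paper's proof uses implicitly.
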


\begin{proof}
In the Coxeter-Dickson basis, the inclusion is represented by the diagonal
matrix \(D\). Hence the index is
\begin{equation}
\det D=2^4\cdot 4^4=2^{12}.
\end{equation}
Since \(\det(E_8)=1\), the determinant of \(\LOk\) is
\begin{equation}
\det(\LOk)=(2^{12})^2=2^{24}.
\end{equation}
\end{proof}

This determinant calculation is simple, but important: the
Okubo shadow is not a small perturbation of \(E_8\), is a very deep
sub-lattice, and the depth is purely two-adic. Consequently, one should not
expect the norm-one shell of \(E_8\) to survive unchanged in the direct Okubo
shadow.

The word conductor is used here because the diagonal scaling measures exactly
how much of the Coxeter-Dickson lattice must be sacrificed in order to keep
the Okubo structure constants integral over \(R=\ZZ[\sqrt3]\). The first four
basis directions are multiplied by \(2\), while the last four are multiplied
by \(4\). This asymmetric scaling is not an aesthetic choice; it is the
minimal integral correction found by the computation. It is also the source of
the two-adic behaviour visible in every subsequent shell count. Once the
lattice has been pushed this far inside \(E_8\), its shortest vectors can no
longer have the same length as the roots of \(E_8\).

From a geometric point of view, the inclusion \(\LOk\subset E_8\) should
therefore be read in the opposite direction from a naive expectation. The
Okubo shadow is not a new copy of the \(E_8\) lattice. It is a filtered
substructure inside \(E_8\), and its shells tell us which vectors remain after
the filter has been applied. The first shells of \(\LOk\) are consequently not
approximations to the Gosset polytope. They are successive layers of a
two-adic selection process.

\begin{proposition}[Okubo norm divisibility]
Every vector in \(\LOk\) has norm divisible by \(4\). Equivalently,
\begin{equation}
S_N(\LOk)=\varnothing\qquad\hbox{unless }4\mid N.
\end{equation}
In particular the direct Okubo shadow has no norm-one elements.
\end{proposition}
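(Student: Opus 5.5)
Write a vector of \(\LOk\) as \(x=\sum_{i=0}^{3}2a_ib_i+\sum_{j=4}^{7}4a_jb_j\) with \(a_i\in\ZZ\), and reduce the divisibility claim to the arithmetic of the Coxeter--Dickson lattice. The key point is that \(\LOk\) lies inside \(2E_8\): every generator \(2b_i\) and \(4b_j\) is twice a vector of \(E_8\). Explicitly,
\begin{equation}
x=2y,\qquad y=\sum_{i=0}^{3}a_ib_i+\sum_{j=4}^{7}2a_jb_j\in E_8 .
\end{equation}
By the quadratic homogeneity of the norm, \(N(x)=4N(y)\).

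So it suffices to check that \(N(y)\in\ZZ\) for every \(y\in E_8\). Under the normalization \(\langle x,x\rangle=2N(x)\), this is equivalent to \(E_8\) being even. The evenness is exactly the statement recorded in Section~3: the Gram matrix of \(b_0,\ldots,b_7\) under \(\langle x,y\rangle=2\operatorname{Re}(x\overline y)\) is even, positive definite and unimodular. If one prefers not to rely on that certification, one can argue directly. The diagonal entries \(\langle b_k,b_k\rangle=2N(b_k)\) are all equal to \(2\), since each \(b_k\) is a unit octonion (\(N(h)=1\) and \(N(e_ih)=N(e_i)N(h)=1\) by multiplicativity). The off-diagonal entries are integers because the basis spans the integral Coxeter--Dickson order. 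Expanding \(\langle y,y\rangle=\sum_k c_k^2\langle b_k,b_k\rangle+2\sum_{k<l}c_kc_l\langle b_k,b_l\rangle\) then shows that \(\langle y,y\rangle\) is even, so \(N(y)\in\ZZ\).

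Consequently \(N(x)=4N(y)\in4\ZZ\), which proves the first statement. The shell vanishing \(S_N(\LOk)=\varnothing\) for \(4\nmid N\) is an immediate restatement. The absence of norm-one elements is the case \(N=1\).

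The only step needing care is the integrality of the off-diagonal Gram entries, that is, the integrality of the bilinear form on \(E_8\). I will take this from the certified Gram matrix of Section~3 rather than recompute the products \(\operatorname{Re}(b_k\overline{b_l})\). I also note that the bound is sharp: \(2b_0\) has norm exactly \(4\), which is consistent with \(S_4(\LOk)\) being the first visible shell.
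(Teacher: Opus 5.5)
Your proof is correct, and it takes a different route from the paper.

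The paper computes the Gram matrix $G_{\mathrm{Ok}}=D^TG_{E_8}D$ and relies on a numerically certified check that every entry is divisible by $8$. It then concludes $x^TG_{\mathrm{Ok}}x\in 8\ZZ$.

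You avoid any entrywise inspection of $G_{\mathrm{Ok}}$. You factor the conductor as $D=2\operatorname{diag}(1,1,1,1,2,2,2,2)$, which gives $\LOk=2M\subset 2E_8$. Then $N(2y)=4N(y)$ together with the evenness of $E_8$ finishes the proof. You even reduce that evenness to $N(b_k)=1$ plus integrality of the bilinear form.

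What your route buys is that it is computation-free. It shows the divisibility is nothing more than the common factor $2$ in the conductor, so the conclusion holds for any sublattice of $2E_8$.

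The paper's entrywise check proves slightly more than the proposition needs. It gives $\langle x,x'\rangle\in 8\ZZ$ for all $x,x'\in\LOk$, whereas your argument on its own yields only $\langle x,x'\rangle=4\langle y,y'\rangle\in 4\ZZ$ off the diagonal. That extra evenness comes from the orthogonality of $b_0,\dots,b_3$, and it is the first trace of the isometry $M\cong\sqrt2\,\ZZ^8$ established in Section~5.

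For the statement about norms, your argument is complete. Your sharpness remark ($N(2b_0)=4$) is also correct.
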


\begin{proof}
The Gram matrix of \(\LOk\) is
\begin{equation}
G_{\mathrm{Ok}}=D^T G_{E_8}D.
\end{equation}
Numerical verifications (see Section 7) certify that every entry of \(G_{\mathrm{Ok}}\) is
divisible by \(8\). Therefore
\begin{equation}
x^TG_{\mathrm{Ok}}x\in 8\ZZ
\end{equation}
for all \(x\in\ZZ^8\). Since \(N(x)=\langle x,x\rangle/2\), one has
\begin{equation}
N(x)\in 4\ZZ.
\end{equation}
\end{proof}

\begin{remark}
This proposition is the first point at which the Okubo story becomes visibly
different from the octonionic one. In the Coxeter-Dickson order, norm-one
elements form the Gosset polytope. In the direct Okubo shadow, the norm-one
shell is empty. The first non-empty shell has norm \(4\), not norm \(1\).
\end{remark}

The first non-empty shell is therefore
\begin{equation}
S_4(\LOk)=\{x\in\LOk:N(x)=4\}
=\{x\in\LOk:\langle x,x\rangle=8\}.
\end{equation}

\begin{proposition}
\label{prop:cross-polytope}
The shell \(S_4(\LOk)\) has \(16\) vectors. It consists of \(8\) antipodal
orthogonal pairs
\begin{equation}
S_4(\LOk)=\{\pm v_1,\ldots,\pm v_8\}
\end{equation}
with
\begin{equation}
\langle v_i,v_j\rangle=8\delta_{ij}.
\end{equation}
Hence \(P_4(\LOk)=\Conv(S_4(\LOk))\) is the \(8\)-dimensional cross-polytope.
After division by \(2\), it becomes an \(A_1^8\) root subsystem of \(E_8\).
\end{proposition}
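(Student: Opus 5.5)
The plan is to avoid enumerating \(x^TG_{\mathrm{Ok}}x=8\) blindly. Instead I would identify the Okubo shadow concretely inside \(\mathbb O\) using the standard coordinates \(1,e_1,\ldots,e_7\). With the normalization \(\langle x,y\rangle=2\operatorname{Re}(x\overline y)\), these coordinates are orthogonal with \(\langle e_k,e_k\rangle=2\). So \(\langle\ ,\ \rangle\) is twice the Euclidean form in these coordinates. By the definition of \(\LOk\), the rescaled lattice \(\tfrac12\LOk\) is spanned over \(\ZZ\) by
\[
1,\ e_1,\ e_2,\ e_3,\ 2h,\ 2e_1h,\ 2e_2h,\ 2e_3h .
\]
Since \(2h=e_1+e_2+e_3+e_4\), each vector \(2e_ih\) is a signed sum of four distinct standard units. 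Hence all eight generators lie in \(\ZZ^8=\bigoplus_k\ZZ e_k\) (with \(e_0=1\)), and \(\tfrac12\LOk\subseteq\ZZ^8\).

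The key step is the reverse inclusion \(\ZZ^8\subseteq\tfrac12\LOk\).
\begin{itemize}
\item The units \(1,e_1,e_2,e_3\) are generators themselves.
\item From \(2h\) we get \(e_4=2h-e_1-e_2-e_3\).
\item For \(i=1,2,3\), expand \(2e_ih=e_i(e_1+e_2+e_3+e_4)=-1+e_ie_j+e_ie_k+e_ie_4\), where \(\{i,j,k\}=\{1,2,3\}\).
\item The products \(e_ie_j,e_ie_k\) are \(\pm e_1,\pm e_2,\pm e_3\), since these span the quaternion subalgebra. Subtracting them leaves the single unit \(e_ie_4\in\{\pm e_5,\pm e_6,\pm e_7\}\).
\end{itemize}
The three values \(e_1e_4,e_2e_4,e_3e_4\) are distinct up to sign, so all of \(e_4,\ldots,e_7\) are obtained. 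This is the step I expect to require the most care: it depends on the octonion multiplication convention implicit in the Coxeter--Dickson basis, which must be fixed consistently. As a cross-check, the index works out. With the doubled form, \(\det\ZZ^8=2^8\), while \(\det E_8=1\). This gives \([E_8:\tfrac12\LOk]=2^4\), consistent with the preceding Proposition, since \(\det(\tfrac12\LOk)=2^{24}/2^{16}=2^8\).

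Once \(\tfrac12\LOk=\ZZ^8\) with form \(2\sum y_k^2\), the proof is immediate. An element \(x\in\LOk\) has \(\langle x,x\rangle=8\) exactly when \(y=x/2\in\ZZ^8\) satisfies \(\sum y_k^2=1\). This happens exactly when \(y=\pm e_k\). Hence \(S_4(\LOk)=\{\pm v_k\}\) with \(v_k=2e_k\), \(k=0,\ldots,7\), and \(\langle v_i,v_j\rangle=4\cdot2\delta_{ij}=8\delta_{ij}\). The convex hull of eight orthogonal antipodal pairs of equal length is the \(8\)-cross-polytope.

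After division by \(2\), the vectors \(\pm e_k\) have squared length \(2\) and lie in \(E_8\), because \(\tfrac12\LOk\subset E_8\). They are pairwise orthogonal, and each reflection fixes the set. So they form a root system of type \(A_1^8\) inside \(E_8\), passing the strict root-shell criterion of Section~2 with simple roots \(e_0,\ldots,e_7\) and diagonal Cartan matrix \(2I\).
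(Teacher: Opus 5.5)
Your proof is correct, but it takes a genuinely different route from the paper and reverses the paper's logical order.

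The paper proves this proposition by exact machine enumeration of \(x^TG_{\mathrm{Ok}}x=8\) in the Coxeter--Dickson basis and reads off the Gram matrix \(8I_8\). Only later, in Theorem~\ref{thm:M-cubic}, does it use these sixteen vectors as an orthogonal frame and compare determinants to conclude \(M=\tfrac12\LOk\cong\sqrt2\,\ZZ^8\). You prove the cubic identification first, directly from the octonion coordinates. The cross-polytope with explicit vertices \(\pm2e_k\), the divisibility \(4\mid N\), and the shell counts \(\#S_N(\LOk)=r_8(N/4)\) then follow by inspection. This gives a hand-checkable argument instead of a computational certificate.

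The cost is dependence on the multiplication table, and you are right to flag it. Your elimination step uses that \(\{1,e_1,e_2,e_3\}\) is closed under multiplication, which is what the paper means by ``the obvious quaternionic part''. Some condition of this kind is unavoidable: in a Fano convention where \(\{1,2,3,4\}\) contains no line, the \(b\)-lattice has determinant \(16\), so it is not unimodular.

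Note also that your ``cross-check'', read backwards, already proves the reverse inclusion with no table bookkeeping. The inclusion \(\tfrac12\LOk\subseteq\ZZ^8\) holds in every convention, since each \(2e_ih\) is a signed sum of four distinct units. Both lattices have determinant \(2^8\) (for \(\tfrac12\LOk\) this comes from its index \(2^4\) in the unimodular \(E_8\)), so they coincide. This version works in every convention for which the paper's unimodularity claim holds.
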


\begin{proof}
Exact enumeration of the equation \(x^TG_{\mathrm{Ok}}x=8\) gives \(16\)
vectors. The Gram matrix of one representative from each antipodal pair is
\begin{equation}
8I_8.
\end{equation}
Therefore the convex hull is the \(8\)-orthoplex. Since each vector of
\(S_4(\LOk)\) is divisible by \(2\) inside the Coxeter-Dickson lattice
coordinates, the rescaled set \((1/2)S_4(\LOk)\) lies in \(E_8\), has squared
length \(2\), and has Gram matrix \(2I_8\) on antipodal representatives. This
is a root subsystem of type \(A_1^8\).
\end{proof}

This result is already enough to correct a possible misconception. The first
Okubo shell does give a subpolytope of the Gosset polytope after division by
\(2\), but it gives only a cross-polytope with \(16\) vertices. The Gosset
polytope has \(240\) vertices. Thus Okubo does not give another direct copy of
the \(E_8\) root shell; it selects a highly constrained coordinate subsystem
inside it.

The cross-polytope should not be dismissed as a small remnant. It is the first
place where the Okubo arithmetic becomes visible as geometry. Eight antipodal
orthogonal pairs represent the cleanest possible shell in rank eight, and the
fact that they arise at norm \(4\) rather than norm \(1\) records the dilation
of the conductor. After division by \(2\) the shell becomes an \(A_1^8\)
subsystem of \(E_8\). This is a genuine root subsystem, but it is only a
coordinate skeleton of the full \(E_8\) root system. The missing roots are not
lost forever; they appear only after the intermediate lattice \(M\) is glued
back to \(E_8\).

The next non-empty shell is
\begin{equation}
S_8(\LOk)=\{x\in\LOk:N(x)=8\}
=\{x\in\LOk:\langle x,x\rangle=16\}.
\end{equation}

\begin{proposition}
\label{prop:D8-root}
The shell \(S_8(\LOk)\) has \(112\) vectors and is the root polytope of type
\(D_8\).
\end{proposition}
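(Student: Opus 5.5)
We prove Proposition~\ref{prop:D8-root} by first making the structure of \(\LOk\) explicit and then recognizing the shell as the \(D_8\) roots in a rescaled cubic model. From Proposition~\ref{prop:cross-polytope} we have vectors \(v_1,\dots,v_8\in\LOk\) with \(\langle v_i,v_j\rangle=8\delta_{ij}\).

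The first step is to decide whether the \(v_i\) form a \(\ZZ\)-basis of \(\LOk\). This is checked with determinants. The Gram determinant of \(v_1,\dots,v_8\) is \(8^8=2^{24}\), which equals \(\det(\LOk)\) from the earlier proposition. Since \(\ZZ v_1+\cdots+\ZZ v_8\subseteq \LOk\) and the index squared is the ratio of the determinants, the index is \(1\). Hence
\begin{equation}
\LOk=\ZZ v_1\oplus\cdots\oplus\ZZ v_8\cong \sqrt8\,\ZZ^8 .
\end{equation}
In practice one must confirm that the sixteen enumerated vectors really lie in \(\LOk\), which holds by definition of the enumeration. This is where the computational certificate from Section~7 enters.

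With this identification the shell count is immediate. Write \(x=\sum a_iv_i\) with \(a_i\in\ZZ\). Then \(\langle x,x\rangle=8\sum a_i^2\), so \(\langle x,x\rangle=16\) holds exactly when \(\sum a_i^2=2\). The solutions are the vectors \(\pm v_i\pm v_j\) with \(i<j\), and there are \(4\binom82=112\) of them. As a consistency check, the count \(16\) for \(\sum a_i^2=1\) recovers Proposition~\ref{prop:cross-polytope}.

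For the root-system claim we use the strict criterion of Section~2. After rescaling by \(1/\sqrt8\), the shell becomes \(\{\pm\epsilon_i\pm\epsilon_j\}\subset\RR^8\), which is the standard \(D_8\) root system. Take the simple roots
\begin{equation}
\alpha_k=\epsilon_k-\epsilon_{k+1}\ (k=1,\dots,7),\qquad \alpha_8=\epsilon_7+\epsilon_8 .
\end{equation}
Their Cartan matrix is the \(D_8\) Cartan matrix. The reflections \(s_{\alpha}\) permute the coordinates and change an even number of signs, so they preserve the shell. The Weyl group \(W(D_8)\) acts transitively on the \(112\) vectors, so the reflection orbit of the simple roots is the whole shell. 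The shell is also antipodal and spans rank \(8\). Its convex hull is therefore the \(D_8\) root polytope.

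The main obstacle is the lattice identification in the first step. The argument depends on the determinant \(\det(\LOk)=2^{24}\) and on exact membership of the \(v_i\), both of which rest on the certified Gram matrix \(G_{\mathrm{Ok}}=D^TG_{E_8}D\). If the index argument were doubtful, the fallback is to enumerate \(x^TG_{\mathrm{Ok}}x=16\) directly, giving \(112\) vectors. One would then check that every shell vector is a sum or difference of two orthogonal elements of \(S_4(\LOk)\), and verify the Cartan matrix and reflection closure on the explicit coordinates. This is the certification route the paper already uses for the preceding propositions.
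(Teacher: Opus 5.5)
Your argument is correct, but it takes a different route from the paper's proof of this proposition. The paper's proof is a direct certified computation on the shell itself. It enumerates \(x^TG_{\mathrm{Ok}}x=16\) and checks antipodality and rank. It then extracts eight simple roots whose normalized Cartan matrix is of type \(D_8\), and verifies that the generated reflections preserve the shell and sweep out all \(112\) vectors.

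You instead first show that the vectors of Proposition~\ref{prop:cross-polytope} already form a \(\ZZ\)-basis, \(\LOk=\ZZ v_1\oplus\cdots\oplus\ZZ v_8\cong\sqrt8\,\ZZ^8\), by comparing \(8^8\) with \(\det(\LOk)=(\det D)^2\det(E_8)=2^{24}\). The count \(4\binom82=112\), the description of the shell as \(\{\pm v_i\pm v_j\}\), and the \(D_8\) Cartan and reflection data then follow by hand. This is exactly the index--determinant argument the paper defers to Theorem~\ref{thm:M-cubic} for \(M=\frac12\LOk\). There the \(D_8\) shell reappears as the orbit \((\pm1,\pm1,0^6)\) in Proposition~\ref{prop:WB8-orbits}. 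In effect you move that identification forward and make the present proposition a corollary of it.

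\textbf{What your route buys.} It is a proof readable on paper whose only computational input is the \(S_4\) enumeration and its Gram matrix \(8I_8\). It also yields the whole theta series \(\Theta_{\LOk}(q)=\Theta_M(q^4)\) and an explicit description of every shell.

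\textbf{What the paper's route buys.} It is an independent check carried out directly in the Okubo coordinates. It does not rely on the correctness of the \(S_4\) enumeration, and it matches the strict reflection-theoretic criterion of Section~2. The price is that it rests on a computational certificate rather than an argument.
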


\begin{proof}
The shell is antipodal and spans rank \(8\). Numerical computation (see Section 7)
selects eight simple roots in the shell. After normalizing the shell vectors
to squared length \(2\), the Cartan matrix is
\begin{equation}
\begin{pmatrix}
2&-1&0&0&0&0&0&0\\
-1&2&-1&0&0&0&0&0\\
0&-1&2&-1&0&0&0&0\\
0&0&-1&2&-1&0&0&0\\
0&0&0&-1&2&-1&0&0\\
0&0&0&0&-1&2&-1&-1\\
0&0&0&0&0&-1&2&0\\
0&0&0&0&0&-1&0&2
\end{pmatrix}.
\end{equation}
This is a Cartan matrix of type \(D_8\). One then verifies
that the reflections generated by these simple roots preserve the shell and
produce an orbit of \(112\) vectors, equal to all of \(S_8(\LOk)\).
\end{proof}

\begin{remark}
The word root polytope is used here in the precise reflection-theoretic sense.
The shell is not called \(D_8\) merely because it has \(112\) points. It is
called \(D_8\) because a Cartan matrix of type \(D_8\) is found inside it and
because the corresponding reflection orbit is exactly the whole shell.
\end{remark}

The transition from \(A_1^8\) to \(D_8\) is one of the main geometric signals
of the computation. The first shell sees only mutually orthogonal directions.
The second shell sees their pairwise combinations and therefore produces the
root system whose roots may be written, in a suitable orthonormal model, as
\(\pm e_i\pm e_j\). In the actual Okubo shadow the coordinates are not
introduced by an orthonormal Euclidean basis but by the certified Gram matrix,
so the statement must be made through Cartan and reflection data. Nevertheless
the geometric meaning is the same: the second visible layer binds the eight
orthogonal directions of the first layer into the \(D_8\) system.

This is also the point where the Okubo hierarchy diverges most clearly from
the ordinary cubic lattice picture. Its numerical beginning,
\[
16,\ 112,\ 448,\ 1136,\ldots,
\]
resembles the shell counts of a scaled cubic structure, but the algebraic
origin is different. In the present setting the sequence is not postulated
from a product of rank-one systems. It is forced by the diagonal conductor
required for Okubo integrality and then verified in the ambient \(E_8\)
metric. The same numbers therefore carry a different meaning: they trace a
non-unital product through a two-adic lattice shadow.

The first sixteen Okubo shells are summarized in Table~\ref{tab:okubo-shells}.
The empty entries are forced by the divisibility theorem, and the non-empty
entries are exact enumeration results.

The table should be read vertically rather than only row by row. The pattern
of three empty shells followed by one non-empty shell is not accidental; it is
the visible form of the divisibility \(4\mid N\). The non-empty rows then show
how quickly the geometry grows once the conductor allows vectors to appear.
From \(16\) to \(112\) vertices one passes from independent coordinate roots
to the \(D_8\) root system. From \(112\) to \(448\) and \(1136\) vertices one
enters a range where the shells are large enough to have serious internal
combinatorics but not yet classified by the tests used in this paper.

This vertical reading is important because it preserves the arithmetic order
of the shells. If one only rescales each non-empty shell to a unit sphere, the
filtration by norm is lost. The Okubo construction is precisely about that
filtration. It says that the algebra first permits an \(A_1^8\) skeleton, then
a \(D_8\) layer, and only later larger configurations. The hierarchy is part
of the result, not a preliminary list of examples.

\begin{table}[htbp]
\centering
\small
\renewcommand{\arraystretch}{1.08}
\begin{tabular}{rrrrl}
\toprule
\(N\) & \(\langle x,x\rangle\) & \(\#S_N(\LOk)\) & Rank & Interpretation \\
\midrule
1 & 2 & 0 & 0 & empty \\
2 & 4 & 0 & 0 & empty \\
3 & 6 & 0 & 0 & empty \\
4 & 8 & 16 & 8 & \(A_1^8\) cross-polytope \\
5 & 10 & 0 & 0 & empty \\
6 & 12 & 0 & 0 & empty \\
7 & 14 & 0 & 0 & empty \\
8 & 16 & 112 & 8 & \(D_8\) root polytope \\
9 & 18 & 0 & 0 & empty \\
10 & 20 & 0 & 0 & empty \\
11 & 22 & 0 & 0 & empty \\
12 & 24 & 448 & 8 & higher Okubo-selected shell \\
13 & 26 & 0 & 0 & empty \\
14 & 28 & 0 & 0 & empty \\
15 & 30 & 0 & 0 & empty \\
16 & 32 & 1136 & 8 & higher Okubo-selected shell \\
\bottomrule
\end{tabular}
\caption{This table summarizes the Okubo shell counts for \(N=1,\ldots,16\).}
\label{tab:okubo-shells}
\end{table}

\begin{proposition}
The shells \(S_{12}(\LOk)\) and \(S_{16}(\LOk)\) are antipodal, full-rank, and
centered. They satisfy the certified degree-two spherical design test used in
the computation.
\end{proposition}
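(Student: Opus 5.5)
The plan is to prove the four properties uniformly for any shell \(S_N(\LOk)\), and to check the spherical design condition only at the end, since that is the one step that needs structure beyond the lattice.

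\textbf{Antipodality, centering, rank.} Antipodality is immediate: \(\LOk\) is a lattice and \(N(-x)=N(x)\), so \(x\in S_N(\LOk)\) implies \(-x\in S_N(\LOk)\). Centering follows from antipodality, because the shell splits into pairs \(\{x,-x\}\) and hence \(\sum_{x\in S_N}x=0\).

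For full rank it suffices to exhibit eight linearly independent vectors in each shell. By Proposition~\ref{prop:cross-polytope} we have vectors \(v_1,\dots,v_8\in\LOk\) with \(\langle v_i,v_j\rangle=8\delta_{ij}\). Taking \(\pm v_i\pm v_j\pm v_k\) gives squared length \(24\), so these vectors lie in \(S_{12}(\LOk)\). Since \(v_1+v_2+v_3\) and \(v_1+v_2-v_3\) and their variants span everything, the shell \(S_{12}(\LOk)\) has full rank. Similarly, \(\pm 2v_i\) lies in \(S_{16}(\LOk)\), and these vectors already span.

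\textbf{Degree-two design.} For an antipodal set, the odd-degree moments vanish automatically. The remaining condition is \(\sum_{x\in S}xx^T=\frac{|S|\cdot 2N}{8}\,G^{-1}\) in coordinates; equivalently, the second moment tensor is a multiple of the identity in an orthonormal frame. My approach is to use the frame \(u_i=v_i/\sqrt8\) and to show that each shell is invariant under the group generated by sign changes \(v_i\mapsto -v_i\) and permutations of the \(v_i\). This is the hyperoctahedral group \(W(B_8)\) acting on the frame.

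Invariance under sign changes kills all off-diagonal moments \(\sum x_ix_j\) with \(i\neq j\). Invariance under permutations makes the diagonal moments \(\sum x_i^2\) all equal. Together these give a scalar second moment, which is exactly the design condition.

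\textbf{Main obstacle.} The hard step is proving this invariance, which requires \(\LOk=\bigoplus_i\ZZ v_i+(\text{glue})\) to be stable under \(W(B_8)\). I would try to show directly that \(\LOk\) is equal to \(\bigoplus\ZZ(v_i/2)\cdot 2\), that is, that \(\LOk\) is spanned by the integer combinations permitted by the \(D_8\) shell of Proposition~\ref{prop:D8-root}.

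Concretely, both \(\LOk\) and the lattice spanned by the \(v_i\) and \((v_i\pm v_j)\) have determinant \(2^{24}\). The \(D_8\) lattice with roots of squared length \(16\) has determinant \(4\cdot 8^8=2^{26}\), so one extra glue vector is needed. The natural candidate is \(v_1\) itself: \(\ZZ^8\) scaled by \(\sqrt8\) has determinant \(8^8=2^{24}\). This matches \(\det\LOk\), so the inclusion \(\bigoplus\ZZ v_i\subseteq\LOk\) is an equality, and \(\LOk\) is isometric to \(\sqrt8\,\ZZ^8\).

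\(W(B_8)\)-invariance is then clear, and the shells can be read off as sign/permutation orbits, e.g.\ \(S_{12}\leftrightarrow(\pm1^3,0^5)\). This also reproduces the counts \(448\) and \(1136=16+1120\), as a consistency check against Table~\ref{tab:okubo-shells}. If the determinant comparison were to fail, the fallback would be the finite computation of \(\sum xx^T\) over the enumerated shells, as the paper's statement suggests.
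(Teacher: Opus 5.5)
Your proof is correct, but it takes a different route from the paper's proof of this proposition. The paper's argument is purely computational. It enumerates \(S_{12}(\LOk)\) and \(S_{16}(\LOk)\), then certifies antipodality, rank \(8\), zero centroid and the scalar second-moment condition numerically, deferring any explanation to Section~\ref{sec:two-adic-gluing}.

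You instead argue structurally:
\begin{itemize}
\item antipodality and centering follow from the symmetry \(x\mapsto -x\) of any lattice shell;
\item full rank follows from the explicit vectors \(\pm v_i\pm v_j\pm v_k\in S_{12}(\LOk)\) and \(\pm 2v_i\in S_{16}(\LOk)\);
\item the design condition follows from \(W(B_8)\)-invariance, which you get from the index argument \(\bigoplus_i\ZZ v_i\subseteq\LOk\) with \(\det(\bigoplus_i\ZZ v_i)=8^8=2^{24}=\det(\LOk)\).
\end{itemize}
That determinant step is exactly the proof of Theorem~\ref{thm:M-cubic}, run for \(\LOk=2M\) instead of \(M\). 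Your passage from invariance to a scalar second moment is what Remark~\ref{rem:forward-WB8} asserts without proof.

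What your route buys is an explanation instead of a certificate. It uses only Proposition~\ref{prop:cross-polytope} and \(\det(\LOk)=2^{24}\), and it needs no enumeration of the large shells. It applies uniformly to every \(S_{4k}(\LOk)\), and it yields \(448=r_8(3)\) and \(1136=r_8(4)\) as a by-product. The paper's computation, in turn, does not depend on the lattice identification at all.

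The detour through the \(D_8\) lattice is unnecessary. As written, the claim that \(\LOk\) is spanned by the \(D_8\) shell is false, since \(\{\pm v_i\pm v_j\}\) spans only an index-\(2\) sublattice, as you yourself notice. The determinant comparison alone gives \(\LOk=\bigoplus_i\ZZ v_i\), so no fallback is needed.
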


\begin{proof}
Exact enumeration gives \(448\) and \(1136\) vectors, respectively. Numerical computations (see Section 7) verify antipodality, rank \(8\), zero centroid, and the second
moment scalar condition relative to the Gram form.
\end{proof}

\begin{remark}
\label{rem:forward-WB8}
We deliberately refrain here from any further classification of
\(S_{12}(\LOk)\) and \(S_{16}(\LOk)\) as regular polytopes or root polytopes. A complete
classification of these shells as unions of \(W(B_8)\)-orbits is nevertheless
available once the intermediate lattice \(M=(1/2)\LOk\) is identified with the
rescaled cubic lattice \(\sqrt2\,\ZZ^8\). This identification, together with the
explicit orbit decomposition, is given in Section~\ref{sec:two-adic-gluing}
(Theorem~\ref{thm:M-cubic} and Proposition~\ref{prop:WB8-orbits}). The properties of antipodality,
full rank, vanishing centroid, and scalar second moment are precisely those
that any union of \(W(B_8)\)-orbits in a cubic-shell automatically satisfies.
\end{remark}

\section{Two-Adic Gluing}\label{sec:two-adic-gluing}

The normalization that compares Okubo shells with \(E_8\) is
\begin{equation}
M=(1/2)\LOk.
\end{equation}
In Coxeter-Dickson coordinates this lattice is
\begin{equation}
M=\ZZ b_0+\ZZ b_1+\ZZ b_2+\ZZ b_3
\oplus \ZZ(2b_4)+\ZZ(2b_5)+\ZZ(2b_6)+\ZZ(2b_7).
\end{equation}
This lattice is central because \(\LOk\) itself is too dilated to compare
directly with the \(E_8\) root shell. The lattice \(M\) is the normalized
object that reveals what the Okubo conductor is selecting inside \(E_8\).

\begin{proposition}
The intermediate lattice \(M\) satisfies
\begin{equation}
M\subset E_8,\qquad [E_8:M]=2^4,\qquad \det(M)=2^8,\qquad \LOk=2M.
\end{equation}
Moreover,
\begin{equation}
S_{4k}(\LOk)/2=S_k(M).
\end{equation}
\end{proposition}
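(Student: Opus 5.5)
The plan is to read everything off the explicit coordinate description of \(M\) in the Coxeter-Dickson basis, exactly as in the proof of the index computation for \(\LOk\).

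\textbf{Inclusion and index.} By definition \(M=(1/2)\LOk\), and dividing the generators \(2b_i\) (\(i\le 3\)) and \(4b_i\) (\(i\ge 4\)) of \(\LOk\) by \(2\) gives the generators \(b_0,\dots,b_3,2b_4,\dots,2b_7\) displayed above. All of these are integral combinations of the basis \(b_0,\dots,b_7\) of \(E_8\), so \(M\subset E_8\). In this basis the inclusion \(M\hookrightarrow E_8\) is represented by the diagonal matrix
\begin{equation}
D'=\operatorname{diag}(1,1,1,1,2,2,2,2)=\tfrac12 D .
\end{equation}
The index is \(|\det D'|=2^4\). Since \(\det(E_8)=1\), the determinant of the Gram matrix \(D'^TG_{E_8}D'\) is \(\det(M)=(\det D')^2=2^8\). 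The identity \(\LOk=2M\) is immediate from \(M=(1/2)\LOk\), or equivalently from \(D=2D'\). As a consistency check, \(\det(\LOk)=2^{16}\det(M)=2^{24}\), in agreement with the earlier proposition on \(\LOk\).

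\textbf{Shell correspondence.} Multiplication by \(2\) is a bijection \(M\to\LOk\). For \(y\in M\) and \(x=2y\),
\begin{equation}
\langle x,x\rangle=4\langle y,y\rangle ,
\end{equation}
so \(N(x)=4N(y)\). Hence \(x\in S_{4k}(\LOk)\) if and only if \(y=x/2\in S_k(M)\), which gives \(S_{4k}(\LOk)/2=S_k(M)\). This also explains the divisibility proposition: the non-empty shells of \(\LOk\) in Table~\ref{tab:okubo-shells} correspond to shells of \(M\), provided \(M\) is integral. That holds because \(M\subset E_8\).

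\textbf{Main point of care.} There is no real obstacle here; the argument is a change of basis plus rescaling. The only thing to watch is the convention that determinants are Gram determinants, so they scale by the square of the index. Note also that the norm map \(N\) scales quadratically, by \(4\) rather than \(2\) under \(x\mapsto 2x\); this is what produces the shift \(N\mapsto 4N\) rather than \(N\mapsto 2N\).
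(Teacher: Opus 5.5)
Your proposal is correct and follows the paper's proof essentially step for step. You read the inclusion off the diagonal matrix \(\operatorname{diag}(1,1,1,1,2,2,2,2)\), which gives index \(2^4\) and \(\det(M)=(2^4)^2\det(E_8)=2^8\); \(\LOk=2M\) holds by definition; and the shell identity follows from \(N(2y)=4N(y)\), where your explicit use of the bijection \(y\mapsto 2y\colon M\to\LOk\) makes the set equality slightly more careful than the paper's one-line phrasing. Your side remark that \(4\mid N\) on \(\LOk\) is also sound, provided it rests on the evenness of \(E_8\), i.e.\ \(N(M)\subset\ZZ\), and not merely on integrality of the bilinear form; read that way, it is a cleaner justification than the paper's numerical check.
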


\begin{proof}
The basis of \(M\) is represented inside \(E_8\) by
\begin{equation}
\operatorname{diag}(1,1,1,1,2,2,2,2),
\end{equation}
whose determinant is \(2^4\). Hence \([E_8:M]=2^4\) and
\(\det(M)=2^8\). The equality \(\LOk=2M\) is immediate from the definitions.
Dividing vectors in \(S_{4k}(\LOk)\) by \(2\) divides the norm by \(4\), giving
the shell identity.
\end{proof}

The theta series of \(M\), computed up to \(N=16\), begins
\begin{equation}
\Theta_M(q)=
1+16q+112q^2+448q^3+1136q^4+2016q^5+3136q^6
\end{equation}
\begin{equation}
{}+5504q^7+9328q^8+12112q^9+14112q^{10}
+21312q^{11}+31808q^{12}+\cdots .
\end{equation}
Consequently,
\begin{equation}
\Theta_{\LOk}(q)=\Theta_M(q^4).
\end{equation}

The coefficients above coincide with the classical sum-of-eight-squares
function \(r_8(N)\), which counts the integer points of squared length \(2N\)
in the rescaled cubic lattice \(\sqrt2\,\ZZ^8\). This is not a numerical
coincidence: the lattice \(M\) is genuinely isometric to \(\sqrt2\,\ZZ^8\), and
the identification can be made explicit through the cross-polytope shell of
Section~4.

\begin{theorem}[Identification of the intermediate lattice]
\label{thm:M-cubic}
Let \(\pm v_1,\ldots,\pm v_8\) be the \(8\) antipodal orthogonal pairs of
Proposition~\ref{prop:cross-polytope} (i.e.,
\(S_4(\LOk)=\{\pm v_1,\ldots,\pm v_8\}\) with
\(\langle v_i,v_j\rangle=8\delta_{ij}\)). Set
\(w_i:=v_i/2\in M\) for \(i=1,\ldots,8\). Then
\(\langle w_i,w_j\rangle=2\delta_{ij}\), and the map \(w_i\mapsto\sqrt2\,e_i\)
extends to a \(\ZZ\)-linear isometry
\begin{equation}
\label{eq:M-isometry}
M\xrightarrow{\;\sim\;}\sqrt2\,\ZZ^8.
\end{equation}
\end{theorem}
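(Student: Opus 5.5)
The plan is to show that the eight vectors \(w_1,\ldots,w_8\) form a \(\ZZ\)-basis of \(M\). Once that is established, the isometry is immediate. The Gram identity is direct: by Proposition~\ref{prop:cross-polytope}, \(\langle v_i,v_j\rangle=8\delta_{ij}\), so \(\langle w_i,w_j\rangle=\tfrac14\cdot 8\delta_{ij}=2\delta_{ij}\). Each \(w_i\) lies in \(M\) because \(v_i\in\LOk=2M\). The linear map determined by \(w_i\mapsto\sqrt2\,e_i\) then preserves the form on \(\operatorname{span}_\ZZ(w_1,\ldots,w_8)\). It remains to prove that this span is all of \(M\).

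For the equality of lattices I will use a determinant (index) count. Put \(W=\ZZ w_1+\cdots+\ZZ w_8\subseteq M\). Its Gram matrix is \(2I_8\), so \(\det W=2^8\). The preceding proposition gives \(\det M=2^8\). Since \(\det W=[M:W]^2\det M\), we get \([M:W]=1\), hence \(W=M\). Then \(w_i\mapsto\sqrt2 e_i\) extends \(\ZZ\)-linearly to a bijection \(M\to\sqrt2\,\ZZ^8\). It preserves the Gram matrix on a basis, so it is an isometry.

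The only step that needs genuine care is the input \(\det M=2^8\). This relies on the claim that \(M\) is spanned inside \(E_8\) by the basis \(b_0,\ldots,b_3,2b_4,\ldots,2b_7\), i.e.\ by \(\operatorname{diag}(1,1,1,1,2,2,2,2)\) relative to a unimodular basis. I would re-derive it from \(\det M=(\det D')^2\det E_8=2^8\cdot1\). As a consistency check, \(\det\LOk=2^{16}\det M=2^{24}\), matching the earlier proposition.

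I would also include a small sanity check. The classification of \(S_4(\LOk)\) gives exactly sixteen norm-\(4\) vectors, so \(S_1(M)=\{\pm w_i\}\). This agrees with \(r_8(1)=16\) and with the theta series \(\Theta_M\) listed above. The check is not logically needed once the index argument is in place, but it confirms that no vector of \(M\) is shorter than the \(w_i\). As a corollary, the shell identity \(S_{4k}(\LOk)/2=S_k(M)\) transports every shell of \(\LOk\) to the set of integer vectors of \(\ZZ^8\) with squared length \(k\), up to the \(\sqrt2\) scaling. This is what the subsequent \(W(B_8)\)-orbit decomposition uses.
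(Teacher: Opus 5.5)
Your proposal is correct and matches the paper's proof essentially step for step: the Gram identity comes from rescaling $\langle v_i,v_j\rangle=8\delta_{ij}$, membership $w_i\in M$ follows from $\LOk=2M$, and the index formula $\det W=[M:W]^2\det M$ with $\det W=\det M=2^8$ forces $W=M$. Your extra checks ($S_1(M)=\{\pm w_i\}$ and $\det\LOk=2^{24}$) agree with the paper but, as you say, are not needed for the argument.
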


\begin{proof}
Since \(v_i\in\LOk=2M\), we have \(w_i\in M\). The orthogonality and the
squared length follow from
\(\langle w_i,w_j\rangle=\tfrac14\langle v_i,v_j\rangle=2\delta_{ij}\).
Let \(\Lambda:=\ZZ w_1+\cdots+\ZZ w_8\subseteq M\). Its Gram matrix is \(2I_8\),
so \(\det(\Lambda)=2^8\). Since \(\det(M)=2^8\) by the previous proposition,
the index \([M:\Lambda]\) is the integer square root of
\(\det(\Lambda)/\det(M)=1\), namely \(1\). Hence \(\Lambda=M\), and the
prescription \(w_i\mapsto\sqrt2\,e_i\) sends an orthogonal basis of squared
length \(2\) to an orthogonal basis of squared length \(2\); it therefore
extends to an isometry \(M\cong\sqrt2\,\ZZ^8\).
\end{proof}

\begin{remark}
\label{rem:cubic-consequence}
Theorem~\ref{thm:M-cubic} pulls the entire Okubo shell hierarchy into a
classical setting. In the integer model provided by the isometry
(\ref{eq:M-isometry}), each shell \(S_N(M)\) becomes the set of points
\(x\in\ZZ^8\) with \(x_1^2+\cdots+x_8^2=N\). The total cardinality is therefore
\(r_8(N)\), and the configuration carries the natural action of the
hyperoctahedral group \(W(B_8)=(\ZZ/2)^8\rtimes S_8\) acting on \(\ZZ^8\) by
signed permutations. Each shell decomposes into \(W(B_8)\)-orbits indexed by the
multiset of absolute values of coordinates.
\end{remark}

\begin{proposition}[\(W(B_8)\)-orbit decomposition of the first shells of \(M\)]
\label{prop:WB8-orbits}
Under the isometry of Theorem~\ref{thm:M-cubic}, the first shells of \(M\)
decompose into \(W(B_8)\)-orbits as follows:
\begin{itemize}
\item \(S_1(M)\) (\(16\) vectors): one orbit, type
  \((\pm1,0,0,0,0,0,0,0)\); cardinality \(2\binom{8}{1}=16\).
\item \(S_2(M)\) (\(112\) vectors): one orbit, type
  \((\pm1,\pm1,0,0,0,0,0,0)\); cardinality \(4\binom{8}{2}=112\).
\item \(S_3(M)\) (\(448\) vectors): one orbit, type
  \((\pm1,\pm1,\pm1,0,0,0,0,0)\); cardinality \(8\binom{8}{3}=448\).
\item \(S_4(M)\) (\(1136\) vectors): two orbits, of types
  \((\pm2,0,0,0,0,0,0,0)\) (cardinality \(2\binom{8}{1}=16\)) and
  \((\pm1,\pm1,\pm1,\pm1,0,0,0,0)\) (cardinality \(16\binom{8}{4}=1120\)).
\item \(S_5(M)\) (\(2016\) vectors): two orbits, of types
  \((\pm2,\pm1,0^6)\) (cardinality \(8\cdot 7\cdot 4=224\)) and
  \((\pm1)^5 0^3\) (cardinality \(32\binom{8}{5}=1792\)).
\end{itemize}
The same decomposition applies to \(S_{4k}(\LOk)\), since
\(S_{4k}(\LOk)/2=S_k(M)\).
\end{proposition}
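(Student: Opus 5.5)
The plan is to transport everything to the integer model via Theorem~\ref{thm:M-cubic}. Under the isometry \(M\cong\sqrt2\,\ZZ^8\), with coordinates \(x=\sum x_i w_i\), we have \(\langle x,x\rangle=2\sum x_i^2\). Hence \(S_N(M)\), the set of vectors with \(\langle x,x\rangle=2N\), corresponds exactly to \(\{x\in\ZZ^8:\sum x_i^2=N\}\). The isometry also intertwines the action of signed permutations of the orthogonal basis \(w_1,\dots,w_8\) with the standard action of \(W(B_8)=(\ZZ/2)^8\rtimes S_8\) on \(\ZZ^8\). As noted in Remark~\ref{rem:cubic-consequence}, two integer vectors lie in the same \(W(B_8)\)-orbit if and only if they have the same multiset of absolute values of coordinates. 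The reason is that signed permutations can move any vector to the sorted vector \((|x|_{(1)}\ge\cdots\ge|x|_{(8)})\), and they preserve that multiset. So the orbit decomposition of \(S_N(M)\) reduces to listing the multisets of nonnegative integers \(\{a_1,\dots,a_8\}\) with \(\sum a_i^2=N\).

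For \(N\le5\) the enumeration is immediate, since the only available squares are \(0,1,4\).
\begin{itemize}
\item \(N=1\): the only pattern is \(1^1 0^7\).
\item \(N=2\): the only pattern is \(1^2 0^6\).
\item \(N=3\): the only pattern is \(1^3 0^5\).
\item \(N=4\): the patterns are \(2\,0^7\) and \(1^4 0^4\).
\item \(N=5\): the patterns are \(2\,1\,0^6\) and \(1^5 0^3\).
\end{itemize}
Each orbit size is then (number of positions for the nonzero entries) \(\times\) \(2^{\#\text{nonzero}}\). This gives \(2\binom81=16\), \(4\binom82=112\), \(8\binom83=448\), \(16+16\binom84=16+1120\), and \(8\cdot7\cdot4+32\binom85=224+1792\). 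In the \(N=5\) case the positions of the \(2\) and the \(1\) are ordered, hence \(8\cdot7\).

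As a consistency check, the sums \(16,112,448,1136,2016\) agree with the coefficients of \(\Theta_M\) stated above and with \(r_8(N)\). Because the pattern lists are exhaustive, the agreement is automatic and not merely numerical.

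The final clause about \(S_{4k}(\LOk)\) follows directly from the earlier identity \(S_{4k}(\LOk)/2=S_k(M)\). Since division by \(2\) is a similarity, each orbit decomposition transfers unchanged.

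The only step needing care is the first. One must make sure that "\(W(B_8)\)" means the signed permutations of the specific orthogonal frame \(\{w_i\}\) obtained from the cross-polytope shell, rather than some other copy acting on \(M\). Once this is fixed, everything else is finite bookkeeping.
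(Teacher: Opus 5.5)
Your proposal is correct and takes the same approach as the paper: transport through the isometry \(M\cong\sqrt2\,\ZZ^8\) of Theorem~\ref{thm:M-cubic}, classify \(W(B_8)\)-orbits by the multiset of absolute values of coordinates, count each orbit as \(2^k\) times the number of placements, and match the totals with \(r_8(N)\). Your explicit list of square patterns for \(N\le 5\) spells out the exhaustiveness that the paper leaves to ``direct verification,'' and your worry about which copy of \(W(B_8)\) is meant does not arise, since \(S_1(M)=\{\pm w_i\}\) forces every automorphism of \(M\) to be a signed permutation of that frame.
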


\begin{proof}
Immediate from Theorem~\ref{thm:M-cubic} and Remark~\ref{rem:cubic-consequence}:
the \(W(B_8)\)-orbits in \(\ZZ^8\) are characterized by the multiset of absolute
values of coordinates, and the cardinality of each orbit is
\(2^k\binom{8}{m_1,m_2,\ldots}\) where \(k\) is the number of nonzero
coordinates and the multinomial coefficient counts the placements of those
coordinates. Direct verification then matches the orbit cardinalities to the
shell counts \(r_8(N)\).
\end{proof}

\begin{remark}
The first two shells admit, in addition to the orbit description above, a
genuine root-system identification: \(S_1(M)\) is the root system \(A_1^8\), and
\(S_2(M)\) is the root system \(D_8\). Beginning with \(S_3(M)\), the orbit description remains
exact, but the shells are no longer root configurations: an orbit of type
\((\pm1,\pm1,\pm1,0^5)\) is not stable under the reflections it would generate
in the sense of a finite root system. The disciplined statement is therefore
that \(S_N(M)\) is fully classified as a union of \(W(B_8)\)-orbits, while only
\(S_1\) and \(S_2\) carry a Cartan-Coxeter root structure.
\end{remark}

\begin{table}[htbp]
\centering
\small
\resizebox{\textwidth}{!}{%
\begin{tabular}{lllll}
\toprule
Lattice & Definition & Index in \(E_8\) & Determinant & First non-empty shell \\
\midrule
\(\LOk\) & \(D E_8\), \(D=(2,2,2,2,4,4,4,4)\) & \(2^{12}\) & \(2^{24}\) & \(16\) vectors at \(N=4\) \\
\(M\) & \((1/2)\LOk\) & \(2^4\) & \(2^8\) & \(16\) vectors at \(N=1\) \\
\(E_8\) & \(2\)-adic gluing of \(M\) & \(1\) & \(1\) & \(240\) vectors at \(N=1\) \\
\bottomrule
\end{tabular}}
\caption{This table summarizes the lattice chain \(\LOk\subset M\subset E_8\).}
\label{tab:lattice-chain}
\end{table}

The quotient computed from the inclusion \(M\subset E_8\) is
\begin{equation}
E_8/M\simeq(\ZZ/2\ZZ)^4.
\end{equation}
It has order \(16\). Since \(\det(M)=256\), the discriminant group of \(M\) has
order \(256\). The quotient \(E_8/M\) is a maximal isotropic gluing subgroup,
and adjoining its cosets produces the unimodular overlattice \(E_8\).

Let us unpack this statement for readers who do not routinely use
discriminant forms. A lattice of determinant \(256\) cannot be unimodular. Its
dual lattice is larger, and the finite quotient \(M^\vee/M\) has \(256\)
elements. An overlattice of \(M\) is obtained by adjoining suitable cosets from
this discriminant group. The word isotropic means that the added cosets have
zero value for the induced quadratic form, so that the resulting overlattice
remains even. Maximality means that one has added as many compatible cosets as
possible. In rank eight this process produces the even unimodular overlattice,
and therefore produces \(E_8\).

The sixteen cosets of \(E_8/M\) are thus not an auxiliary decoration. They are
the missing arithmetic data needed to pass from the Okubo-selected sublattice
to the full Gosset shell. The first shell of \(M\) has only \(16\) roots, and
the second has \(112\). The remaining \(E_8\) roots are distributed among the
gluing cosets. This is the precise sense in which the Okubo construction
approaches \(E_8\): not by producing the \(240\) roots at once, but by
presenting a conductor lattice and a finite gluing problem whose solution is
the Coxeter-Dickson lattice.

\begin{proposition}
The \(E_8\) Gosset polytope is recovered from the Okubo intermediate lattice by
\(2\)-adic gluing. It is not the first shell of \(\LOk\); it is the first shell
of the unimodular overlattice \(E_8\).
\end{proposition}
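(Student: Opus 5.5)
The proof has three steps: identify the glue group, show that gluing it to \(M\) gives \(E_8\), and check that the Gosset polytope is a shell of \(E_8\) but not of \(\LOk\).

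\emph{The glue group.} Use the inclusion \(M\subset E_8\) from the previous proposition, written in Coxeter--Dickson coordinates as \(\operatorname{diag}(1,1,1,1,2,2,2,2)\). Unimodularity of \(E_8\) gives \(E_8=E_8^\vee\subset M^\vee\), so
\begin{equation}
M\subset E_8\subset M^\vee .
\end{equation}
Hence \(H:=E_8/M\) is a subgroup of the discriminant group \(M^\vee/M\), which has order \(\det M=2^8\). The diagonal matrix shows directly that \(H\) is generated by the classes of \(b_4,\dots,b_7\), each of order \(2\). So \(H\simeq(\ZZ/2)^4\) and \(|H|^2=\det M\).

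\emph{Isotropy and maximality.} Because \(E_8\) is even, the discriminant quadratic form \(q(x)=\tfrac12\langle x,x\rangle \bmod \ZZ\) vanishes on \(H\), so \(H\) is isotropic. Conversely, any even overlattice \(L\) of \(M\) corresponds to an isotropic subgroup \(L/M\), and \(\det L=\det M/|L/M|^2\). So unimodularity is equivalent to \(|L/M|=2^4\), which is the maximal possible order of an isotropic subgroup. Therefore \(E_8=\bigcup_{h\in H}(h+M)\) is precisely the maximal-isotropic gluing of \(M\) along \(H\). The abstract classification of rank-eight even unimodular lattices is not needed, because \(E_8\) is given concretely.

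\emph{The shell statement.} The previous Theorem identifies \(\Conv(S_1(E_8))\) with the Gosset polytope \(4_{21}\). By the Okubo norm divisibility proposition, \(S_1(\LOk)=\varnothing\). Moreover, every nonempty shell of \(\LOk\) has at most \(1136\) vectors for \(N\le16\), and has squared length divisible by \(8\), so it cannot be a \(240\)-element set of squared length \(2\). Thus the Gosset polytope is not a shell of \(\LOk\). To make the phrase ``recovered by gluing'' concrete, decompose
\begin{equation}
S_1(E_8)=\bigsqcup_{h\in H}\bigl(S_1(E_8)\cap(h+M)\bigr).
\end{equation}
The trivial coset contributes \(S_1(M)\), the \(16\) roots of the \(A_1^8\) cross-polytope, and the remaining \(224\) roots lie in the nontrivial cosets. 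I would check the count \(16+224=240\) using \(\Theta_{E_8}=\sum_h\Theta_{h+M}\).

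\emph{Expected obstacle.} The main difficulty is describing the \(224\) roots coset by coset, i.e.\ locating \(H\) inside \(M^\vee/M\simeq(\ZZ/2)^8\) in the cubic model \(M\cong\sqrt2\,\ZZ^8\) of Theorem~\ref{thm:M-cubic}. There \(M^\vee/M\) is \(\mathbb F_2^8\) with \(q(x)=\mathrm{wt}(x)/4 \bmod 1\). An isotropic \(H\) of order \(16\) with minimal norm \(2\) in the glued lattice should be the extended Hamming code \([8,4,4]\). If so, each of its \(14\) weight-\(4\) codewords contributes \(2^4=16\) roots of shape \(\tfrac{1}{\sqrt2}(\pm1^4,0^4)\), giving \(14\cdot16=224\). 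To confirm this I would transport the \(b_4,\dots,b_7\) glue vectors through the isometry and verify their supports. This accounting is optional for the proposition itself, which already follows from the first two steps.
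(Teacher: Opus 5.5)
Your proof is correct, and it follows the same outline as the paper's: identify $E_8/M\simeq(\ZZ/2)^4$, show it is maximal isotropic in $M^\vee/M$, conclude that the glued lattice is the unimodular $E_8$, and compare first shells. The difference is in how you justify the middle steps.

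The paper certifies the quotient and its maximal isotropy by numerical computation. It then identifies the determinant-one even overlattice with $E_8$, which, read from $M$ upward, relies on the uniqueness of the even unimodular lattice of rank eight. You instead read $H$ off the diagonal inclusion matrix and derive everything from $M\subset E_8=E_8^\vee\subset M^\vee$. Evenness of $E_8$ gives isotropy, and $|H|^2=\det M$ together with $|H|^2\le|M^\vee/M|$ for any isotropic $H$ gives maximality. So you need neither computation nor the classification.

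Your top-down argument certifies only the particular gluing $E_8/M$. It does not show that every maximal isotropic gluing of $M$ yields $E_8$, though the proposition does not require this. Your optional step supplies it anyway, and you can drop the hedge ``should be''. In the cubic model, an isotropic subgroup of order $16$ in $M^\vee/M\simeq\mathbb{F}_2^8$ with $q=\mathrm{wt}/4$ is exactly a doubly even self-dual code of length $8$. Up to coordinate permutation the only such code is the extended Hamming (first-order Reed--Muller $[8,4,4]$) code. This gives the count $16+14\cdot 16=240$, and it makes precise the Construction-A relation that the paper lists only as future work.

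One small clean-up: the remark that shells of $\LOk$ have at most $1136$ vectors for $N\le 16$ plays no role. Divisibility of squared lengths by $8$ already shows that no shell of $\LOk$ equals $S_1(E_8)$. The first non-empty shell $S_4(\LOk)$ has $16\ne 240$ vertices, which settles the claim even up to rescaling.
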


\begin{proof}
With some numerical computations one verifies
\begin{equation}
E_8/M\simeq(\ZZ/2\ZZ)^4
\end{equation}
and that the corresponding subgroup of the discriminant group of \(M\) is
maximal isotropic. The resulting overlattice has determinant \(1\), hence is
the even unimodular rank-eight overlattice \(E_8\). Its first shell has \(240\)
roots, while the first shell of \(\LOk\) has \(16\) vectors at norm \(4\).
\end{proof}

This proposition can be read in two directions. Starting from \(E_8\), the
lattice \(M\) is an index \(16\) sublattice whose first shells retain only a
visible part of the \(E_8\) geometry. Starting from Okubo, the same lattice
\(M\) is the normalized form of the conductor shadow, and the quotient
\((\ZZ/2\ZZ)^4\) records the missing cosets that have to be restored. The two
readings meet at the same finite gluing problem. This is why the word
``recover'' must be used with care. The Okubo algebra does not recover the
Gosset polytope by presenting its \(240\) vertices as integral elements of
minimal norm. It recovers it only after the conductor has been normalized and
the maximal isotropic gluing has been performed.

For the purposes of shell polytopes, this distinction is not a technical
footnote. It changes the geometry one sees at the bottom of the hierarchy. The
first polytope of \(\LOk\) is an \(8\)-cross-polytope, and the first polytope
of \(E_8\) is the Gosset polytope. They are related, but they are not the same
object at different scales. They live at different stages of the lattice
chain. The intermediate lattice \(M\) is the bridge which makes this relation
explicit.

\begin{table}[htbp]
\centering
\small
\resizebox{\textwidth}{!}{%
\begin{tabular}{llrl}
\toprule
Lattice & Norm \(N\) & Vertices & Type / \(W(B_8)\)-orbit (in cubic coordinates of \(M\cong\sqrt2\,\ZZ^8\)) \\
\midrule
\(E_8\) & 1 & 240 & \(E_8\) root polytope, Gosset \(4_{21}\) \\
\(\LOk\) & 4 & 16 & \(A_1^8\) cross-polytope; orbit \((\pm1,0^7)\) \\
\(\LOk\) & 8 & 112 & \(D_8\) root polytope; orbit \((\pm1,\pm1,0^6)\) \\
\(\LOk\) & 12 & 448 & single orbit \((\pm1,\pm1,\pm1,0^5)\) \\
\(\LOk\) & 16 & 1136 & two orbits: \((\pm2,0^7)\) [\(16\)] and \((\pm1)^4 0^4\) [\(1120\)] \\
\(M\) & 1 & 16 & \(A_1^8\); orbit \((\pm1,0^7)\) \\
\(M\) & 2 & 112 & \(D_8\); orbit \((\pm1,\pm1,0^6)\) \\
\(M\) & 3 & 448 & single orbit \((\pm1,\pm1,\pm1,0^5)\) \\
\(M\) & 4 & 1136 & two orbits: \((\pm2,0^7)\) [\(16\)] and \((\pm1)^4 0^4\) [\(1120\)] \\
\bottomrule
\end{tabular}}
\caption{This table summarizes the shell polytopes appearing in the
\(2\)-adic Okubo hierarchy. The orbit descriptions for \(M\) and the
corresponding rescaled descriptions for \(\LOk\) follow from
Theorem~\ref{thm:M-cubic} and Proposition~\ref{prop:WB8-orbits}.}
\label{tab:shell-polytopes}
\end{table}

Table~\ref{tab:shell-polytopes} is meant to keep three levels visible at once.
The first level is the direct Okubo conductor \(\LOk\), where the first
non-empty shell occurs only at norm \(4\). The second level is the normalized
lattice \(M\), where the same shell appears at norm \(1\). The third level is
the unimodular overlattice \(E_8\), where the complete Gosset shell appears.
Thus the table is not merely a list of polytopes. It records the passage from
an order forced by multiplication to a normalized lattice and then to the
unimodular completion.

The comparison also prevents an ambiguity that otherwise arises naturally.
One might say that Okubo ``contains'' an \(A_1^8\) shell, a \(D_8\) shell, and
eventually \(E_8\). This is true only if the lattice stage is specified. The
\(A_1^8\) and \(D_8\) shells are already visible in the conductor hierarchy,
while the \(E_8\) shell belongs to the glued overlattice. The difference is
not semantic. It is the arithmetic content of the construction.

\section{Conclusions and Future Developments}

In this work we defined a shell-polytope viewpoint on integral systems in
composition algebras and used it to compare the classical Hurwitz cases with
the non-unital Okubo case. The first norm shells of the classical systems
recover the usual root-polytopal configurations, once the convention behind
the old unit-elements column is made explicit. The Okubo algebra, although
built from the octonionic background, does not add a new direct \(E_8\) shell.
Its arithmetic selects a \(2\)-primary conductor lattice \(\LOk\), whose first
non-empty shell is \(A_1^8\) and whose second non-empty shell is \(D_8\). The
full \(E_8\) Gosset polytope is recovered only after passing to the
intermediate lattice \(M=(1/2)\LOk\) and applying \(2\)-adic gluing.

A central structural observation is that the intermediate lattice \(M\) is
isometric to the rescaled cubic lattice \(\sqrt2\,\ZZ^8\)
(Theorem~\ref{thm:M-cubic}). This identification pulls the Okubo hierarchy into
a classical setting: every shell \(S_N(M)\) is a union of \(W(B_8)\)-orbits,
indexed by the multiset of absolute values of coordinates, and each orbit is
counted by a standard multinomial expression
(Proposition~\ref{prop:WB8-orbits}). In particular the higher shells \(448\)
and \(1136\) of the conductor are no longer mysterious: they are the orbits of
type \((\pm1,\pm1,\pm1,0^5)\) and the union
\((\pm2,0^7)\sqcup(\pm1)^4 0^4\) inside the cubic lattice. This explains, in
elementary terms, the spherical-design properties that were independently
certified at degree two.

The conclusion is therefore deliberately nuanced. The title points toward
integral systems and shell polytopes of composition algebras, and the most
familiar endpoint is \(E_8\). But the main lesson is not that every
eight-dimensional composition algebra immediately reproduces \(E_8\). The
lesson is that integral closure, norm shells, and gluing can separate phenomena
that are otherwise conflated. In the octonionic Coxeter-Dickson order they
coincide; in the Okubo order they form a sequence whose intermediate stage is
the cubic lattice. That sequence is where the new geometry lives.

The cubic identification simplifies, but does not exhaust, the future program.
Once \(M\cong\sqrt2\,\ZZ^8\) is known, several questions become natural and
well-posed. The full automorphism group of \(M\) is the hyperoctahedral group
\(W(B_8)\), but the relevant symmetry for the Okubo construction is the
\emph{arithmetic} stabilizer of the order \(\mathcal{O}_0\), which is a
subgroup of \(W(B_8)\) and need not be the full group. Identifying this
subgroup, and decomposing each shell into its arithmetic orbits, is the natural
finite analogue of replacing the continuous groups \(\mathrm{SU}(3)\) or
\(G_2\) by their integral counterparts. It would also be interesting to relate
the gluing \(M\subset E_8\) of index \(2^4\) to the standard
Construction-A presentation of \(E_8\) from the binary
\([8,4,4]\) Reed--Muller code, and to ask whether the Okubo arithmetic selects
a particular code basis.

One should also return to the algebra after the lattice computations have been
completed. The present paper uses the Okubo product to determine an integral
conductor and then studies the induced metric shells. A deeper sequel should
ask how much of the shell geometry is visible directly from the non-unital
multiplication, beyond what is forced by the cubic identification.

There is also a conceptual continuation. The classical integral systems show
that the language of composition algebras naturally touches the theory of root
systems. The Okubo computation shows that non-unital composition algebras can
touch the same theory in a less direct but arithmetically richer way, with
the cubic lattice as a hidden intermediate stage. It is therefore natural to
ask whether other symmetric composition algebras, other orders over quadratic
coefficient rings, or other choices of conductor produce similar shell
hierarchies and similar bridges through classical lattices. The framework of
the present paper was designed so that such questions can be asked without
changing the basic definitions.

\section{Computational Methods}

All computations used for the stated results are exact. The input data are
integer Gram matrices and diagonal inclusions. Shells are enumerated by solving
integer quadratic equations
\begin{equation}
x^TGx=q,
\end{equation}
where \(q=2N\). Root identifications are not made from vertex counts alone:
they use Cartan matrices and reflection orbit closure. The higher shells are
therefore deliberately left unclassified when those stronger tests have not
been completed.

The computational method is intentionally elementary at its core. Once a Gram
matrix has been fixed, the shell problem is a finite problem in integral
quadratic forms. The enumeration bounds come from positivity, and every vector
found can be checked by direct substitution. The more delicate part is not the
enumeration itself but the interpretation of the resulting finite set. The full certificates and computational verifications are
available from the author upon request. Verifications separate counting, rank, antipodality, centering,
Cartan recognition, reflection closure, theta coefficients, and gluing data. This separation is meant to make the article reproducible in a strong sense.
The reader need not trust a drawing of a polytope, nor a decimal approximation
to an eigenvalue, nor an informal comparison with a known root system. 

\section*{Acknowledgments}

The author thanks Alessio Marrani and Francesco Zucconi for discussions and
for the stimulating questions on Okubo algebras. He also thanks Raymond
Aschheim for discussions on integral numbers, and Richard Clawson, David
Chester, and Klee Irwin for discussions on lattices and root systems.

\section*{Statements and Declarations}

The author declares that he has no competing interests. No external funding was
received for this work. The complete computational certificates and
verification files supporting the claims of the paper are available from the
author upon reasonable request.

\end{document}